\useunder{\uline}{\ul}{}
\newtheorem{corollary}{Corollary}
\newtheorem{theorem}{Theorem}
\newtheorem{lemma}{Lemma}
\newtheorem{remark}{Remark}
\let\pdfoutput=\undefined\fi
\chardef\@x10\chardef\@xv60
\def\tcitime{
\def\@time{%
  \@minute\time\@hour\@minute\divide\@hour\@xv
  \ifnum\@hour<\@x 0\fi\the\@hour:%
  \multiply\@hour\@xv\advance\@minute-\@hour
  \ifnum\@minute<\@x 0\fi\the\@minute
  }}%
\def\x@hyperref#1#2#3{%
   \catcode`\~ = 12
   \catcode`\$ = 12
   \catcode`\_ = 12
   \catcode`\# = 12
   \catcode`\& = 12
   \catcode`\% = 12
   \y@hyperref{#1}{#2}{#3}%
}
\def\y@hyperref#1#2#3#4{%
   #2\ref{#4}#3
   \catcode`\~ = 13
   \catcode`\$ = 3
   \catcode`\_ = 8
   \catcode`\# = 6
   \catcode`\& = 4
   \catcode`\% = 14
}
\def\QCTOpt[#1]#2{%
  \def\QCTOptB{#1}
  \def\QCTOptA{#2}
}
\def\QCTNOpt#1{%
  \def\QCTOptA{#1}
  \let\QCTOptB\empty
}
\def\Qct{%
  \@ifnextchar[{%
    \QCTOpt}{\QCTNOpt}
}
\def\QCBOpt[#1]#2{%
  \def\QCBOptB{#1}%
  \def\QCBOptA{#2}%
}
\def\QCBNOpt#1{%
  \def\QCBOptA{#1}%
  \let\QCBOptB\empty
}
\def\Qcb{%
  \@ifnextchar[{%
    \QCBOpt}{\QCBNOpt}%
}
\def\PrepCapArgs{%
  \ifx\QCBOptA\empty
    \ifx\QCTOptA\empty
      {}%
    \else
      \ifx\QCTOptB\empty
        {\QCTOptA}%
      \else
        [\QCTOptB]{\QCTOptA}%
      \fi
    \fi
  \else
    \ifx\QCBOptA\empty
      {}%
    \else
      \ifx\QCBOptB\empty
        {\QCBOptA}%
      \else
        [\QCBOptB]{\QCBOptA}%
      \fi
    \fi
  \fi
}
\def\GRAPHICSPS#1{%
 \ifcase\GRAPHICSTYPE
   \special{ps: #1}%
 \or
   \special{language "PS", include "#1"}%
 \fi
}%
\def\graffile#1#2#3#4{%
    \bgroup
	   \@inlabelfalse
       \leavevmode
       \@ifundefined{bbl@deactivate}{\def~{\string~}}{\activesoff}%
        \raise -#4 \BOXTHEFRAME{%
           \hbox to #2{\raise #3\hbox to #2{\null #1\hfil}}}%
    \egroup
}%
\def\draftbox#1#2#3#4{%
 \leavevmode\raise -#4 \hbox{%
  \frame{\rlap{\protect\tiny #1}\hbox to #2%
   {\vrule height#3 width\z@ depth\z@\hfil}%
  }%
 }%
}%
\let\nographics=\@msidraft
\newif\ifwasdraft
\def\GRAPHIC#1#2#3#4#5{%
   \ifnum\@msidraft=\@ne\draftbox{#2}{#3}{#4}{#5}%
   \else\graffile{#1}{#3}{#4}{#5}%
   \fi
}
\def\addtoLaTeXparams#1{%
    \edef\LaTeXparams{\LaTeXparams #1}}%
\newif\ifBoxFrame \BoxFramefalse
\newif\ifOverFrame \OverFramefalse
\newif\ifUnderFrame \UnderFramefalse
\def\BOXTHEFRAME#1{%
   \hbox{%
      \ifBoxFrame
         \frame{#1}%
      \else
         {#1}%
      \fi
   }%
}
\def\doFRAMEparams#1{\BoxFramefalse\OverFramefalse\UnderFramefalse\readFRAMEparams#1\end}%
\def\readFRAMEparams#1{%
 \ifx#1\end%
  \let\next=\relax
  \else
  \ifx#1i\dispkind=\z@\fi
  \ifx#1d\dispkind=\@ne\fi
  \ifx#1f\dispkind=\tw@\fi
  \ifx#1t\addtoLaTeXparams{t}\fi
  \ifx#1b\addtoLaTeXparams{b}\fi
  \ifx#1p\addtoLaTeXparams{p}\fi
  \ifx#1h\addtoLaTeXparams{h}\fi
  \ifx#1X\BoxFrametrue\fi
  \ifx#1O\OverFrametrue\fi
  \ifx#1U\UnderFrametrue\fi
  \ifx#1w
    \ifnum\@msidraft=1\wasdrafttrue\else\wasdraftfalse\fi
    \@msidraft=\@ne
  \fi
  \let\next=\readFRAMEparams
  \fi
 \next
 }%
\def\IFRAME#1#2#3#4#5#6{%
      \bgroup
      \let\QCTOptA\empty
      \let\QCTOptB\empty
      \let\QCBOptA\empty
      \let\QCBOptB\empty
      #6%
      \parindent=0pt
      \leftskip=0pt
      \rightskip=0pt
      \setbox0=\hbox{\QCBOptA}%
      \@tempdima=#1\relax
      \ifOverFrame
          \typeout{This is not implemented yet}%
          \show\HELP
      \else
         \ifdim\wd0>\@tempdima
            \advance\@tempdima by \@tempdima
            \ifdim\wd0 >\@tempdima
               \setbox1 =\vbox{%
                  \unskip\hbox to \@tempdima{\hfill\GRAPHIC{#5}{#4}{#1}{#2}{#3}\hfill}%
                  \unskip\hbox to \@tempdima{\parbox[b]{\@tempdima}{\QCBOptA}}%
               }%
               \wd1=\@tempdima
            \else
               \textwidth=\wd0
               \setbox1 =\vbox{%
                 \noindent\hbox to \wd0{\hfill\GRAPHIC{#5}{#4}{#1}{#2}{#3}\hfill}\\%
                 \noindent\hbox{\QCBOptA}%
               }%
               \wd1=\wd0
            \fi
         \else
            \ifdim\wd0>0pt
              \hsize=\@tempdima
              \setbox1=\vbox{%
                \unskip\GRAPHIC{#5}{#4}{#1}{#2}{0pt}%
                \break
                \unskip\hbox to \@tempdima{\hfill \QCBOptA\hfill}%
              }%
              \wd1=\@tempdima
           \else
              \hsize=\@tempdima
              \setbox1=\vbox{%
                \unskip\GRAPHIC{#5}{#4}{#1}{#2}{0pt}%
              }%
              \wd1=\@tempdima
           \fi
         \fi
         \@tempdimb=\ht1
         \advance\@tempdimb by -#2
         \advance\@tempdimb by #3
         \leavevmode
         \raise -\@tempdimb \hbox{\box1}%
      \fi
      \egroup%
}%
\def\DFRAME#1#2#3#4#5{%
  \vspace\topsep
  \hfil\break
  \bgroup
     \leftskip\@flushglue
	 \rightskip\@flushglue
	 \parindent\z@
	 \parfillskip\z@skip
     \let\QCTOptA\empty
     \let\QCTOptB\empty
     \let\QCBOptA\empty
     \let\QCBOptB\empty
	 \vbox\bgroup
        \ifOverFrame 
           #5\QCTOptA\par
        \fi
        \GRAPHIC{#4}{#3}{#1}{#2}{\z@}%
        \ifUnderFrame 
           \break#5\QCBOptA
        \fi
	 \egroup
  \egroup
  \vspace\topsep
  \break
}%
\def\FFRAME#1#2#3#4#5#6#7{%
  \@ifundefined{floatstyle}
    {
     \begin{figure}[#1]%
    }
    {
	 \ifx#1h
      \begin{figure}[H]%
	 \else
      \begin{figure}[#1]%
	 \fi
	}
  \let\QCTOptA\empty
  \let\QCTOptB\empty
  \let\QCBOptA\empty
  \let\QCBOptB\empty
  \ifOverFrame
    #4
    \ifx\QCTOptA\empty
    \else
      \ifx\QCTOptB\empty
        \caption{\QCTOptA}%
      \else
        \caption[\QCTOptB]{\QCTOptA}%
      \fi
    \fi
    \ifUnderFrame\else
      \label{#5}%
    \fi
  \else
    \UnderFrametrue%
  \fi
  \begin{center}\GRAPHIC{#7}{#6}{#2}{#3}{\z@}\end{center}%
  \vspace{-11pt}
  \ifUnderFrame
    #4
    \ifx\QCBOptA\empty
      \caption{}%
    \else
      \ifx\QCBOptB\empty
        \caption{\QCBOptA}%
      \else
        \caption[\QCBOptB]{\QCBOptA}%
      \fi
    \fi
    \label{#5}%
  \fi
  \end{figure}%
 }%
\def\makeactives{
  \catcode`\"=\active
  \catcode`\;=\active
  \catcode`\:=\active
  \catcode`\'=\active
  \catcode`\~=\active
}
   \gdef\activesoff{%
      \def"{\string"}%
      \def;{\string;}%
      \def:{\string:}%
      \def'{\string'}%
      \def~{\string~}%
    }
\def\FRAME#1#2#3#4#5#6#7#8{%
 \bgroup
 \ifnum\@msidraft=\@ne
   \wasdrafttrue
 \else
   \wasdraftfalse%
 \fi
 \def\LaTeXparams{}%
 \dispkind=\z@
 \def\LaTeXparams{}%
 \doFRAMEparams{#1}%
 \ifnum\dispkind=\z@\IFRAME{#2}{#3}{#4}{#7}{#8}{#5}\else
  \ifnum\dispkind=\@ne\DFRAME{#2}{#3}{#7}{#8}{#5}\else
   \ifnum\dispkind=\tw@
    \edef\@tempa{\noexpand\FFRAME{\LaTeXparams}}%
    \@tempa{#2}{#3}{#5}{#6}{#7}{#8}%
    \fi
   \fi
  \fi
  \ifwasdraft\@msidraft=1\else\@msidraft=0\fi{}%
  \egroup
 }%
\def\TEXUX#1{"texux"}
\long\def\QQQ#1#2{%
     \long\expandafter\def\csname#1\endcsname{#2}}%
\long\def\QQA#1#2{}%
\def\QTR#1#2{{\csname#1\endcsname {#2}}}%
\def\EXPAND#1[#2]#3{}%
\def\NOEXPAND#1[#2]#3{}%
\def\LaTeXparent#1{}%
\def\ChildStyles#1{}%
\def\ChildDefaults#1{}%
\def\QTagDef#1#2#3{}%
  \providecommand{\UNICODE}[2][]{\protect\rule{.1in}{.1in}}
  \providecommand{\U}[1]{\protect\rule{.1in}{.1in}}
\def\QQfnmark#1{\footnotemark}
 \def\abstract{%
  \if@twocolumn
   \section*{Abstract (Not appropriate in this style!)}%
   \else \small 
   \begin{center}{\bf Abstract\vspace{-.5em}\vspace{\z@}}\end{center}%
   \quotation 
   \fi
  }%
   \def\registered{\relax\ifmmode{}\r@gistered
                    \else$\m@th\r@gistered$\fi}%
 \def\r@gistered{^{\ooalign
  {\hfil\raise.07ex\hbox{$\scriptstyle\rm\text{R}$}\hfil\crcr
  \mathhexbox20D}}}}{}%
\newdimen\theight
\def\newfmtname{LaTeX2e}
  \DeclareOldFontCommand{\rm}{\normalfont\rmfamily}{\mathrm}
  \DeclareOldFontCommand{\sf}{\normalfont\sffamily}{\mathsf}
  \DeclareOldFontCommand{\tt}{\normalfont\ttfamily}{\mathtt}
  \DeclareOldFontCommand{\bf}{\normalfont\bfseries}{\mathbf}
  \DeclareOldFontCommand{\it}{\normalfont\itshape}{\mathit}
  \DeclareOldFontCommand{\sl}{\normalfont\slshape}{\@nomath\sl}
  \DeclareOldFontCommand{\sc}{\normalfont\scshape}{\@nomath\sc}
\def\alpha{{\Greekmath 010B}}%
\def\beta{{\Greekmath 010C}}%
\def\gamma{{\Greekmath 010D}}%
\def\delta{{\Greekmath 010E}}%
\def\epsilon{{\Greekmath 010F}}%
\def\zeta{{\Greekmath 0110}}%
\def\eta{{\Greekmath 0111}}%
\def\theta{{\Greekmath 0112}}%
\def\iota{{\Greekmath 0113}}%
\def\kappa{{\Greekmath 0114}}%
\def\lambda{{\Greekmath 0115}}%
\def\mu{{\Greekmath 0116}}%
\def\nu{{\Greekmath 0117}}%
\def\xi{{\Greekmath 0118}}%
\def\pi{{\Greekmath 0119}}%
\def\rho{{\Greekmath 011A}}%
\def\sigma{{\Greekmath 011B}}%
\def\tau{{\Greekmath 011C}}%
\def\upsilon{{\Greekmath 011D}}%
\def\phi{{\Greekmath 011E}}%
\def\chi{{\Greekmath 011F}}%
\def\psi{{\Greekmath 0120}}%
\def\omega{{\Greekmath 0121}}%
\def\varepsilon{{\Greekmath 0122}}%
\def\vartheta{{\Greekmath 0123}}%
\def\varpi{{\Greekmath 0124}}%
\def\varrho{{\Greekmath 0125}}%
\def\varsigma{{\Greekmath 0126}}%
\def\varphi{{\Greekmath 0127}}%
\def\nabla{{\Greekmath 0272}}
\def\FindBoldGroup{%
   {\setbox0=\hbox{$\mathbf{x\global\edef\theboldgroup{\the\mathgroup}}$}}%
}
\def\Greekmath#1#2#3#4{%
    \if@compatibility
        \ifnum\mathgroup=\symbold
           \mathchoice{\mbox{\boldmath$\displaystyle\mathchar"#1#2#3#4$}}%
                      {\mbox{\boldmath$\textstyle\mathchar"#1#2#3#4$}}%
                      {\mbox{\boldmath$\scriptstyle\mathchar"#1#2#3#4$}}%
                      {\mbox{\boldmath$\scriptscriptstyle\mathchar"#1#2#3#4$}}%
        \else
           \mathchar"#1#2#3#4%
        \fi 
    \else 
        \FindBoldGroup
        \ifnum\mathgroup=\theboldgroup 
           \mathchoice{\mbox{\boldmath$\displaystyle\mathchar"#1#2#3#4$}}%
                      {\mbox{\boldmath$\textstyle\mathchar"#1#2#3#4$}}%
                      {\mbox{\boldmath$\scriptstyle\mathchar"#1#2#3#4$}}%
                      {\mbox{\boldmath$\scriptscriptstyle\mathchar"#1#2#3#4$}}%
        \else
           \mathchar"#1#2#3#4%
        \fi     	    
	  \fi}
\newif\ifGreekBold  \GreekBoldfalse
\let\SAVEPBF=\pbf
\def\pbf{\GreekBoldtrue\SAVEPBF}%
  \newcounter{equationnumber}  
  \def\mathletters{%
     \addtocounter{equation}{1}
     \edef\@currentlabel{\theequation}%
     \setcounter{equationnumber}{\c@equation}
     \setcounter{equation}{0}%
     \edef\theequation{\@currentlabel\noexpand\alph{equation}}%
  }
    \def\BibTeX{{\rm B\kern-.05em{\sc i\kern-.025em b}\kern-.08em
                 T\kern-.1667em\lower.7ex\hbox{E}\kern-.125emX}}}{}%
\def\AmS{{\protect\usefont{OMS}{cmsy}{m}{n}%
                A\kern-.1667em\lower.5ex\hbox{M}\kern-.125emS}}}{}%
\def\@@eqncr{\let\@tempa\relax
    \ifcase\@eqcnt \def\@tempa{& & &}\or \def\@tempa{& &}%
      \else \def\@tempa{&}\fi
     \@tempa
     \if@eqnsw
        \iftag@
           \@taggnum
        \else
           \@eqnnum\stepcounter{equation}%
        \fi
     \fi
     \global\tag@false
     \global\@eqnswtrue
     \global\@eqcnt\z@\cr}
\def\TCItag{\@ifnextchar*{\@TCItagstar}{\@TCItag}}
\def\@TCItag#1{%
    \global\tag@true
    \global\def\@taggnum{(#1)}%
    \global\def\@currentlabel{#1}}
\def\@TCItagstar*#1{%
    \global\tag@true
    \global\def\@taggnum{#1}%
    \global\def\@currentlabel{#1}}
\def\tint{\msi@int\textstyle\int}%
\def\tiint{\msi@int\textstyle\iint}%
\def\tiiint{\msi@int\textstyle\iiint}%
\def\tiiiint{\msi@int\textstyle\iiiint}%
\def\tidotsint{\msi@int\textstyle\idotsint}%
\def\toint{\msi@int\textstyle\oint}%
\newtoks\temptoksa
\newtoks\temptoksb
\newtoks\temptoksc
\def\msi@int#1#2{%
 \def\@temp{{#1#2\the\temptoksc_{\the\temptoksa}^{\the\temptoksb}}}%
 \futurelet\@nextcs
 \@int
}
\def\@int{%
   \ifx\@nextcs\limits
      \typeout{Found limits}%
      \temptoksc={\limits}%
	  \let\@next\@intgobble%
   \else\ifx\@nextcs\nolimits
      \typeout{Found nolimits}%
      \temptoksc={\nolimits}%
	  \let\@next\@intgobble%
   \else
      \typeout{Did not find limits or no limits}%
      \temptoksc={}%
      \let\@next\msi@limits%
   \fi\fi
   \@next   
}%
\def\@intgobble#1{%
   \typeout{arg is #1}%
   \msi@limits
}
\def\msi@limits{%
   \temptoksa={}%
   \temptoksb={}%
   \@ifnextchar_{\@limitsa}{\@limitsb}%
}
\def\@limitsa_#1{%
   \temptoksa={#1}%
   \@ifnextchar^{\@limitsc}{\@temp}%
}
\def\@limitsb{%
   \@ifnextchar^{\@limitsc}{\@temp}%
}
\def\@limitsc^#1{%
   \temptoksb={#1}%
   \@ifnextchar_{\@limitsd}{\@temp}%
}
\def\@limitsd_#1{%
   \temptoksa={#1}%
   \@temp
}
\def\dint{\msi@int\displaystyle\int}%
\def\diint{\msi@int\displaystyle\iint}%
\def\diiint{\msi@int\displaystyle\iiint}%
\def\diiiint{\msi@int\displaystyle\iiiint}%
\def\didotsint{\msi@int\displaystyle\idotsint}%
\def\doint{\msi@int\displaystyle\oint}%
\def\ExitTCILatex{\makeatother }
\if@compatibility\message{amsmath already loaded}\fi\aftergroup\ExitTCILatex}
\if@compatibility\message{amstex already loaded}\fi\aftergroup\ExitTCILatex}
\if@compatibility\message{amsgen already loaded}\fi\aftergroup\ExitTCILatex}
\let\DOTSI\relax
\def\RIfM@{\relax\ifmmode}%
\def\FN@{\futurelet\next}%
\def\iint{\DOTSI\intno@\tw@\FN@\ints@}%
\def\iiint{\DOTSI\intno@\thr@@\FN@\ints@}%
\def\iiiint{\DOTSI\intno@4 \FN@\ints@}%
\def\idotsint{\DOTSI\intno@\z@\FN@\ints@}%
\def\ints@{\findlimits@\ints@@}%
\newif\iflimtoken@
\newif\iflimits@
\def\findlimits@{\limtoken@true\ifx\next\limits\limits@true
 \else\ifx\next\nolimits\limits@false\else
 \limtoken@false\ifx\ilimits@\nolimits\limits@false\else
 \ifinner\limits@false\else\limits@true\fi\fi\fi\fi}%
\def\multint@{\int\ifnum\intno@=\z@\intdots@                          
 \else\intkern@\fi                                                    
 \ifnum\intno@>\tw@\int\intkern@\fi                                   
 \ifnum\intno@>\thr@@\int\intkern@\fi                                 
 \int}
\def\multintlimits@{\intop\ifnum\intno@=\z@\intdots@\else\intkern@\fi
 \ifnum\intno@>\tw@\intop\intkern@\fi
 \ifnum\intno@>\thr@@\intop\intkern@\fi\intop}%
\def\intic@{%
    \mathchoice{\hskip.5em}{\hskip.4em}{\hskip.4em}{\hskip.4em}}%
\def\negintic@{\mathchoice
 {\hskip-.5em}{\hskip-.4em}{\hskip-.4em}{\hskip-.4em}}%
\def\ints@@{\iflimtoken@                                              
 \def\ints@@@{\iflimits@\negintic@
   \mathop{\intic@\multintlimits@}\limits                             
  \else\multint@\nolimits\fi                                          
  \eat@}
 \else                                                                
 \def\ints@@@{\iflimits@\negintic@
  \mathop{\intic@\multintlimits@}\limits\else
  \multint@\nolimits\fi}\fi\ints@@@}%
\def\intkern@{\mathchoice{\!\!\!}{\!\!}{\!\!}{\!\!}}%
\def\plaincdots@{\mathinner{\cdotp\cdotp\cdotp}}%
\def\intdots@{\mathchoice{\plaincdots@}%
 {{\cdotp}\mkern1.5mu{\cdotp}\mkern1.5mu{\cdotp}}%
 {{\cdotp}\mkern1mu{\cdotp}\mkern1mu{\cdotp}}%
 {{\cdotp}\mkern1mu{\cdotp}\mkern1mu{\cdotp}}}%
\def\RIfM@{\relax\protect\ifmmode}
\def\text{\RIfM@\expandafter\text@\else\expandafter\mbox\fi}
\let\nfss@text\text
\def\text@#1{\mathchoice
   {\textdef@\displaystyle\f@size{#1}}%
   {\textdef@\textstyle\tf@size{\firstchoice@false #1}}%
   {\textdef@\textstyle\sf@size{\firstchoice@false #1}}%
   {\textdef@\textstyle \ssf@size{\firstchoice@false #1}}%
   \glb@settings}
\def\textdef@#1#2#3{\hbox{{%
                    \everymath{#1}%
                    \let\f@size#2\selectfont
                    #3}}}
\newif\iffirstchoice@
\def\Let@{\relax\iffalse{\fi\let\\=\cr\iffalse}\fi}%
\def\vspace@{\def\vspace##1{\crcr\noalign{\vskip##1\relax}}}%
\def\multilimits@{\bgroup\vspace@\Let@
 \baselineskip\fontdimen10 \scriptfont\tw@
 \advance\baselineskip\fontdimen12 \scriptfont\tw@
 \lineskip\thr@@\fontdimen8 \scriptfont\thr@@
 \lineskiplimit\lineskip
 \vbox\bgroup\ialign\bgroup\hfil$\m@th\scriptstyle{##}$\hfil\crcr}%
\def\Sb{_\multilimits@}%
\def\endSb{\crcr\egroup\egroup\egroup}%
\def\Sp{^\multilimits@}%
\newdimen\ex@
\def\rightarrowfill@#1{$#1\m@th\mathord-\mkern-6mu\cleaders
 \hbox{$#1\mkern-2mu\mathord-\mkern-2mu$}\hfill
 \mkern-6mu\mathord\rightarrow$}%
\def\leftarrowfill@#1{$#1\m@th\mathord\leftarrow\mkern-6mu\cleaders
 \hbox{$#1\mkern-2mu\mathord-\mkern-2mu$}\hfill\mkern-6mu\mathord-$}%
\def\leftrightarrowfill@#1{$#1\m@th\mathord\leftarrow
\mkern-6mu\cleaders
 \hbox{$#1\mkern-2mu\mathord-\mkern-2mu$}\hfill
 \mkern-6mu\mathord\rightarrow$}%
\def\overrightarrow{\mathpalette\overrightarrow@}%
\def\overrightarrow@#1#2{\vbox{\ialign{##\crcr\rightarrowfill@#1\crcr
 \noalign{\kern-\ex@\nointerlineskip}$\m@th\hfil#1#2\hfil$\crcr}}}%
\def\overleftarrow{\mathpalette\overleftarrow@}%
\def\overleftarrow@#1#2{\vbox{\ialign{##\crcr\leftarrowfill@#1\crcr
 \noalign{\kern-\ex@\nointerlineskip}$\m@th\hfil#1#2\hfil$\crcr}}}%
\def\overleftrightarrow{\mathpalette\overleftrightarrow@}%
\def\overleftrightarrow@#1#2{\vbox{\ialign{##\crcr
   \leftrightarrowfill@#1\crcr
 \noalign{\kern-\ex@\nointerlineskip}$\m@th\hfil#1#2\hfil$\crcr}}}%
\def\underrightarrow{\mathpalette\underrightarrow@}%
\def\underrightarrow@#1#2{\vtop{\ialign{##\crcr$\m@th\hfil#1#2\hfil
  $\crcr\noalign{\nointerlineskip}\rightarrowfill@#1\crcr}}}%
\def\underleftarrow{\mathpalette\underleftarrow@}%
\def\underleftarrow@#1#2{\vtop{\ialign{##\crcr$\m@th\hfil#1#2\hfil
  $\crcr\noalign{\nointerlineskip}\leftarrowfill@#1\crcr}}}%
\def\underleftrightarrow{\mathpalette\underleftrightarrow@}%
\def\underleftrightarrow@#1#2{\vtop{\ialign{##\crcr$\m@th
  \hfil#1#2\hfil$\crcr
 \noalign{\nointerlineskip}\leftrightarrowfill@#1\crcr}}}%
\def\qopnamewl@#1{\mathop{\operator@font#1}\nlimits@}
\let\nlimits@\displaylimits
\def\setboxz@h{\setbox\z@\hbox}
\def\varlim@#1#2{\mathop{\vtop{\ialign{##\crcr
 \hfil$#1\m@th\operator@font lim$\hfil\crcr
 \noalign{\nointerlineskip}#2#1\crcr
 \noalign{\nointerlineskip\kern-\ex@}\crcr}}}}
 \def\rightarrowfill@#1{\m@th\setboxz@h{$#1-$}\ht\z@\z@
  $#1\copy\z@\mkern-6mu\cleaders
  \hbox{$#1\mkern-2mu\box\z@\mkern-2mu$}\hfill
  \mkern-6mu\mathord\rightarrow$}
\def\leftarrowfill@#1{\m@th\setboxz@h{$#1-$}\ht\z@\z@
  $#1\mathord\leftarrow\mkern-6mu\cleaders
  \hbox{$#1\mkern-2mu\copy\z@\mkern-2mu$}\hfill
  \mkern-6mu\box\z@$}
\def\projlim{\qopnamewl@{proj\,lim}}
\def\injlim{\qopnamewl@{inj\,lim}}
\def\varinjlim{\mathpalette\varlim@\rightarrowfill@}
\def\varprojlim{\mathpalette\varlim@\leftarrowfill@}
\def\varliminf{\mathpalette\varliminf@{}}
\def\varliminf@#1{\mathop{\underline{\vrule\@depth.2\ex@\@width\z@
   \hbox{$#1\m@th\operator@font lim$}}}}
\def\varlimsup{\mathpalette\varlimsup@{}}
\def\varlimsup@#1{\mathop{\overline
  {\hbox{$#1\m@th\operator@font lim$}}}}
\def\align{\@verbatim \frenchspacing\@vobeyspaces \@alignverbatim
You are using the "align" environment in a style in which it is not defined.}
\let\csname endalign*\endcsname =\endtrivlist
\def\alignat{\@verbatim \frenchspacing\@vobeyspaces \@alignatverbatim
You are using the "alignat" environment in a style in which it is not defined.}
\let\csname endalignat*\endcsname =\endtrivlist
\def\xalignat{\@verbatim \frenchspacing\@vobeyspaces \@xalignatverbatim
You are using the "xalignat" environment in a style in which it is not defined.}
\let\csname endxalignat*\endcsname =\endtrivlist
\def\gather{\@verbatim \frenchspacing\@vobeyspaces \@gatherverbatim
You are using the "gather" environment in a style in which it is not defined.}
\let\csname endgather*\endcsname =\endtrivlist
\def\multiline{\@verbatim \frenchspacing\@vobeyspaces \@multilineverbatim
You are using the "multiline" environment in a style in which it is not defined.}
\let\csname endmultiline*\endcsname =\endtrivlist
\def\arrax{\@verbatim \frenchspacing\@vobeyspaces \@arraxverbatim
You are using a type of "array" construct that is only allowed in AmS-LaTeX.}
\def\tabulax{\@verbatim \frenchspacing\@vobeyspaces \@tabulaxverbatim
You are using a type of "tabular" construct that is only allowed in AmS-LaTeX.}
\let\csname endarrax*\endcsname =\endtrivlist
\let\csname endtabulax*\endcsname =\endtrivlist
 \def\endequation{%
     \ifmmode\ifinner 
      \iftag@
        \addtocounter{equation}{-1} 
        $\hfil
           \displaywidth\linewidth\@taggnum\egroup \endtrivlist
        \global\tag@false
        \global\@ignoretrue   
      \else
        $\hfil
           \displaywidth\linewidth\@eqnnum\egroup \endtrivlist
        \global\tag@false
        \global\@ignoretrue 
      \fi
     \else   
      \iftag@
        \addtocounter{equation}{-1} 
        \eqno \hbox{\@taggnum}
        \global\tag@false%
        $$\global\@ignoretrue
      \else
        \eqno \hbox{\@eqnnum}
        $$\global\@ignoretrue
      \fi
     \fi\fi
 } 
 \newif\iftag@ \tag@false
 \def\TCItag{\@ifnextchar*{\@TCItagstar}{\@TCItag}}
 \def\@TCItag#1{%
     \global\tag@true
     \global\def\@taggnum{(#1)}%
     \global\def\@currentlabel{#1}}
 \def\@TCItagstar*#1{%
     \global\tag@true
     \global\def\@taggnum{#1}%
     \global\def\@currentlabel{#1}}
     \def\tag{\@ifnextchar*{\@tagstar}{\@tag}}
     \def\@tag#1{%
         \global\tag@true
         \global\def\@taggnum{(#1)}}
     \def\@tagstar*#1{%
         \global\tag@true
         \global\def\@taggnum{#1}}
\begin{document}

\title{An Accelerated Proximal Gradient-based Model Predictive Control Algorithm}
\author{ Jia~Wang, Ying~Yang,~\IEEEmembership{Senior Member,~IEEE}

\thanks{%
This work is supported by the National Key R$\&$D Program of
China (No. 2021YFB3301204) and the National Natural Science
Foundation of China under grants 62173003 and U1713223. J. Wang and Y. Yang are with the State Key Lab for Turbulence and
Complex Systems, Department of Mechanics and Engineering Science, College of Engineering, Peking University, Beijing 100871, China (Email: pkuwangjia@pku.edu.cn; yy@pku.edu.cn). Corresponding author: Ying Yang.}
}
\maketitle

\begin{abstract}
In this letter, an accelerated quadratic programming (QP) algorithm is proposed based on the proximal gradient method. The algorithm can achieve convergence rate $O(1/p^{\alpha})$, where $p$ is the iteration number and $\alpha$ is the given positive integer. The proposed algorithm improves the convergence rate of existing algorithms that achieve $O(1/p^{2})$. The key idea is that iterative parameters are selected from a group of specific high order polynomial equations. The performance of the proposed algorithm is assessed on the randomly generated model predictive control (MPC) optimization problems. The experimental results show that our algorithm can outperform the state-of-the-art optimization software MOSEK and ECOS for the small size MPC problems.

\end{abstract}

\begin{IEEEkeywords}
Quadratic programming; proximal gradient method; real-time optimization; model predictive control.
\end{IEEEkeywords}

\section{Introduction}

Many engineering optimization problems can be formulated to quadratic programming (QP) problems. For example, model predictive control (MPC), which has been widely used in many industrial processes \cite{Qin_2003}. However, the solving of the QP problem is often computationally demanding. In practice, many industrial processes require a fast solution of the problem, for example, the control systems with high sampling rate \cite{Juan_2014}. Therefore, it is important to develop an accelerated algorithm for solving QP problems.

For reducing the computational load of the controller, QP problems are solved by using online optimization technique. Popular QP solvers use an interior-point method \cite{Domahidi2012}, an active-set method \cite{Ferreau_2014} and a dual Newton method \cite{Frasch_2015}. However, above solvers require the solution of the linearization system of the Karush-Kuhn-Tucker (KKT) conditions at every iteration. For this reason, the great attention has been given to the first-order methods for the online optimization \cite{Parys_2019,Giselsson_2013,Alberto_2021}. In recent years, the proximal gradient-based accelerated algorithms are widely used to solve MPC problems \cite{Giselsson2014}. Specifically, the iterative algorithm is designed based on the proximal gradient method (PGM) to deal with the constraint of Lagrange multiplier more easily \cite{Giselsson_2013,Parys_2019,Giselsson2014,Giselsson2015}. Moreover, methods in \cite{Beck_2009,Nesterov2013}, i.e., fast iterative shrinkage-thresholding algorithm (FISTA) improves the iteration convergence rate from $\small O(1/p)$ to $\small O(1/p^{2})$. The key idea of this improvement is that the positive real root of a specific quadratic polynomial equation is selected as the iterative parameter. Inspired by the work in \cite{Beck_2009} and \cite{Giselsson_2013}, an accelerated PGM algorithm is proposed for fast solving QP problems in this letter. We show that the FISTA in \cite{Beck_2009} is a special case of the proposed method and the convergence rate can be improved from $\small O(1/p^{2})$ in \cite{Beck_2009} to $\small O(1/p^{\alpha})$ by selecting the positive real roots of a group of high order polynomial equations as the iterative parameters. To assess the performance of the proposed algorithm, a batch of randomly generated MPC problems are solved. Then, comparing the resulted execution time to state-of-the-art optimization softwares, in particular MOSEK \cite{Erling2003} and ECOS \cite{Domahidi2013ecos}.

The paper is organized as follows. In Section \ref{section2}, the QP problem is formulated into the dual form and the PGM is introduced. The accelerated PGM for the dual problem is proposed in Section \ref{section3}. In Section \ref{section4}, the numerical experiment based on the MPC are provided. Section \ref{section5} concludes the result of this letter.

\section{Problem Formulation}
\label{section2}

\subsection{Primal and Dual Problems}

Consider the standard quadratic programming problem

\begin{small}
\begin{equation}\label{standard_QP}
\begin{split}
&\min\limits_{\xi}
\frac{1}{2}
\xi^{T}\mathcal{H}\xi
+
\mathcal{G}^{T}\xi
\\
&\ s.t.\ \mathcal{A}\xi\leq\mathcal{B}.
\end{split}
\end{equation}
\end{small}Assume that there exists $\xi$ such that $ \mathcal{A}\xi<\mathcal{B}$, which means that the Slater's condition holds and there is no duality gap \cite{Boyd}, the dual problem of (\ref{standard_QP}) is formulated as

\begin{small}
\begin{equation}\label{dual_P}
\sup\limits_{\mu\geq0}
\inf\limits_{\xi}
\begin{bmatrix}
\frac{1}{2}\xi^{T}\mathcal{H}\xi
+
\mathcal{G}^{T}\xi
+
\mu^{T}(\mathcal{A}\xi-\mathcal{B})
\end{bmatrix}.
\end{equation}
\end{small}Take the partial derivative with respect to $\small \xi$ and according to the first-order optimality condition, we have

\begin{small}
\begin{equation*}
\begin{split}
&\frac{\partial}{\partial \xi}
\begin{bmatrix}
\frac{1}{2}\xi^{T}\mathcal{H}\xi+
(\mathcal{A}^{T}\mu+\mathcal{G})^{T}\xi-
\mu^{T}\mathcal{B}
\end{bmatrix}=0 \\
&\Rightarrow \xi=\mathcal{H}^{-1}(-\mathcal{A}^{T}\mu-\mathcal{G}).
\end{split}
\end{equation*}
\end{small}In this way, (\ref{dual_P}) is transformed into

\begin{small}
\begin{equation}\label{dual_problem}
\sup\limits_{\mu\geq0}
\begin{bmatrix}
-\frac{1}{2}
(\mathcal{A}^{T}\mu+\mathcal{G})^{T}
\mathcal{H}^{-1}
(\mathcal{A}^{T}\mu+\mathcal{G})
-
\mathcal{B}^{T}\mu
\end{bmatrix}.
\end{equation}
\end{small}Let $\small f(\mu)=\frac{1}{2}(\mathcal{A}^{T}\mu+\mathcal{G})^{T}
\mathcal{H}^{-1}(\mathcal{A}^{T}\mu+\mathcal{G})+\mathcal{B}^{T}\mu$ be the new objective, then minimizing $\small f(\mu)$ yields the new optimization problem.

\subsection{Proximal Gradient Method}

In this subsection, the PGM is used to solve the dual problem. Specifically, the following nonsmooth function $\small g$ is introduced to describe the constraint of $\small f(\mu)$

\begin{small}
\begin{equation}
g(\mu)
=
\begin{cases}
0, & \mbox{if } \mu\geq0 \\
+\infty, & \mbox{otherwise}.
\end{cases}
\end{equation}
\end{small}In this way, the constrained optimization problem $\small \min\limits_{\mu\geq0}f(\mu)$ is equivalent to the unconstrained one, i.e., $\small \min\limits_{\mu}f(\mu)+g(\mu)$. Based on the work in \cite{Beck_2009}, let $\small \zeta^{p}=\mu^{p}+\frac{\tau_{p}-1}{\tau_{p+1}}(\mu^{p}-\mu^{p-1})$, where $\small \tau_{p}>0$ for $\small p=1,2,\cdots$ and $\small p$ is iteration number. Then the above problem can be solved by

\begin{small}
\begin{equation}\label{proximal_step}
\mu^{p+1}
=
\mathrm{P}_{\mu}
(\zeta^{p}-\frac{1}{L}\nabla f(\zeta^{p})),
\end{equation}
\end{small}where $\small L$ is the Lipschitz constant of $\small \nabla f$ and $\small \mathrm{P}_{\mu}$ is the Euclidean projection to $\small \{\mu|\mu\geq0\}$. According to the result in \cite{Giselsson_2013}, there is
$\small \nabla f(\mu)=\mathcal{A}\mathcal{H}^{-1}(\mathcal{A}^{T}\mu+\mathcal{G})+\mathcal{B}$, then we have

\begin{small}
\begin{equation}
\nabla f(\zeta^{p})
=
-
\mathcal{A}
\begin{bmatrix}
\xi^{p}
+
\frac{\tau_{p}-1}{\tau_{p+1}}(\xi^{p}-\xi^{p-1})
\end{bmatrix}+\mathcal{B}.
\end{equation}
\end{small}Therefore, (\ref{proximal_step}) can be written as

\begin{small}
\begin{equation}
\mu_{l}^{p+1}
=
\max
\begin{Bmatrix}
0,
\zeta_{l}^{p}+\frac{1}{L}
\begin{bmatrix}
\mathcal{A}_{l}
(
\xi^{p}
+
\frac{\tau_{p}-1}{\tau_{p+1}}(\xi^{p}-\xi^{p-1})
)
-\mathcal{B}_{l}
\end{bmatrix}
\end{Bmatrix}
\end{equation}
\end{small}where $\mu_{l}$ denotes the $l$-th component of the vector $\mu$. $\mathcal{A}_{l}$ and $\mathcal{B}_{l}$ are the $\small l$-th row of $\mathcal{A}$ and $\mathcal{B}$. The classical PGM to solve $\min\limits_{\mu}f(\mu)+g(\mu)$ can be summarized as Algorithm \ref{algorithm1}, in which $\tau_{p}$ and $\tau_{p+1}$ are iterative parameters, which will be discussed in the next subsection.

\begin{algorithm}
\caption{Proximal Gradient Method.}
\label{algorithm1}
\begin{algorithmic}[1]
\REQUIRE ~~\\
Initial parameters $\small \zeta_{l}^{1}=\mu_{l}^{0}$, $\small \tau_{1}=1$ and $\small \bar{\xi}^{1}=\xi_{0}$.
\ENSURE ~~The optimal decision variable $\small \mu^{*}$. \\
\WHILE {$\small p\geq1$}
\STATE $\small \mu_{l}^{p}=\max\{0,\zeta_{l}^{p}+\frac{1}{L}(\mathcal{A}_{l}\bar{\xi}^{p}-\mathcal{B}_{l})\},\ \forall l$.\\
\STATE Looking up table for $\tau_{p}$ and $\tau_{p+1}$.\\
\STATE $\small \xi^{p}=\mathcal{H}^{-1}(-\mathcal{A}^{T}\mu^{p}-\mathcal{G})$.\\
\STATE $\small \zeta^{p+1}=\mu^{p}+\frac{\tau_{p}-1}{\tau_{p+1}}(\mu^{p}-\mu^{p-1})$.\\
\STATE $\small \bar{\xi}^{p+1}=\xi^{p}+\frac{\tau_{p}-1}{\tau_{p+1}}(\xi^{p}-\xi^{p-1})$.\\
\STATE $\small p=p+1.$
\ENDWHILE
\end{algorithmic}
\end{algorithm}

\section{Accelerated MPC Iteration}

\label{section3}

\subsection{Accelerated Scheme and Convergence Analysis}

The traditional iterative parameters $\tau_{p}$ and $\tau_{p+1}$ are selected based on the positive real root of the following second-order polynomial equation

\begin{small}
\begin{equation}\label{2_polynomial}
\tau_{p+1}^{2}-\tau_{p+1}-\tau_{p}^{2}=0
\end{equation}
\end{small}with $\tau_{1}=1$. With the aid of (\ref{2_polynomial}), the convergence rate $O(1/p^{2})$ can be achieved \cite{Beck_2009}. In this work, we show that the convergence rate can be enhanced to $O(1/p^{\alpha})$ only by selecting iterative parameters appropriately. Specifically, for the given order $\alpha\in\{2,3,\cdots\}$, iterative parameters are determined by the positive real root of the $\alpha$th-order equation

\begin{small}
\begin{equation}\label{alpha_polynomial}
\tau_{p+1}^{\alpha}-\tau_{p+1}^{\alpha-1}-\tau_{p}^{\alpha}=0
\end{equation}
\end{small}with the initial value $\tau_{1}=1$, instead of (\ref{2_polynomial}). This is the main difference between our method and the method in \cite{Beck_2009}.

\begin{lemma}\label{lemma1}
The $\alpha$th-order polynomial equation (\ref{alpha_polynomial}) has following properties:
\begin{enumerate}
  \item For $\alpha\in\{2,3,\cdots\}$, the polynomial equation (\ref{alpha_polynomial}) has the unique positive real root.
  \item For $p\in\{1,2,\cdots\}$, the unique positive real root has the lower bound as
      \begin{small}
      \begin{equation*}
      \tau_{p}\geq\frac{p+\alpha-1}{\alpha}.
      \end{equation*}
      \end{small}
\end{enumerate}
\end{lemma}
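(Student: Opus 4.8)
The plan is to treat the $\alpha$th-order relation (\ref{alpha_polynomial}) as a scalar equation in the single unknown $\tau_{p+1}$, with $\tau_p>0$ regarded as a fixed parameter, and to study the auxiliary function $\phi(x)=x^{\alpha}-x^{\alpha-1}-\tau_p^{\alpha}$ on $(0,\infty)$. Both assertions of the lemma then follow from elementary calculus together with a single application of the weighted arithmetic--geometric mean inequality.

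For the first property, I would compute $\phi'(x)=x^{\alpha-2}\bigl(\alpha x-(\alpha-1)\bigr)$, so that $\phi$ is strictly decreasing on $\bigl(0,\tfrac{\alpha-1}{\alpha}\bigr)$ and strictly increasing on $\bigl(\tfrac{\alpha-1}{\alpha},\infty\bigr)$. Since $\phi(0)=-\tau_p^{\alpha}<0$, the function stays negative on all of $\bigl(0,\tfrac{\alpha-1}{\alpha}\bigr]$; on $\bigl(\tfrac{\alpha-1}{\alpha},\infty\bigr)$ it then increases continuously from a negative value to $+\infty$, so it vanishes exactly once. Hence (\ref{alpha_polynomial}) has a unique positive root $\tau_{p+1}$, and this root satisfies $\tau_{p+1}>\tfrac{\alpha-1}{\alpha}$. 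Starting from $\tau_1=1>0$ and iterating, this simultaneously shows that the whole sequence $\{\tau_p\}$ is well defined with every term positive, which is what makes the bound in the second part meaningful.

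For the second property, I would argue by induction on $p$. The base case is the equality $\tau_1=1=\tfrac{1+\alpha-1}{\alpha}$. Assume $\tau_p\ge\tfrac{p+\alpha-1}{\alpha}$. Because $\tau_{p+1}$ lies in the region where $\phi$ is increasing and $\tfrac{p+\alpha}{\alpha}\ge1>\tfrac{\alpha-1}{\alpha}$ lies there too, it suffices to check $\phi\bigl(\tfrac{p+\alpha}{\alpha}\bigr)\le0$, that is $\bigl(\tfrac{p+\alpha}{\alpha}\bigr)^{\alpha}-\bigl(\tfrac{p+\alpha}{\alpha}\bigr)^{\alpha-1}\le\tau_p^{\alpha}$; by the inductive hypothesis this reduces to the purely algebraic inequality $p\,(p+\alpha)^{\alpha-1}\le(p+\alpha-1)^{\alpha}$. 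I would obtain the latter from the weighted AM--GM inequality applied to $p$ and $p+\alpha$ with weights $\tfrac{1}{\alpha}$ and $\tfrac{\alpha-1}{\alpha}$, after noting the identity $\tfrac{1}{\alpha}\,p+\tfrac{\alpha-1}{\alpha}\,(p+\alpha)=p+\alpha-1$; raising $p+\alpha-1\ge p^{1/\alpha}(p+\alpha)^{(\alpha-1)/\alpha}$ to the power $\alpha$ gives precisely what is needed.

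The derivative computation, the monotonicity bookkeeping, and the algebra reducing the inductive step to $p(p+\alpha)^{\alpha-1}\le(p+\alpha-1)^{\alpha}$ are all routine. The only place that requires a genuine idea is this last inequality: establishing it uniformly in $\alpha$ and $p$, rather than case by case, is the main obstacle, and the key observation is that $p+\alpha-1$ is exactly the convex combination $\tfrac{1}{\alpha}p+\tfrac{\alpha-1}{\alpha}(p+\alpha)$, so that concavity of $t\mapsto t^{1/\alpha}$, equivalently weighted AM--GM, closes the argument.
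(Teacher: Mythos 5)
Your proposal is correct. For the uniqueness claim your argument is essentially the paper's: the same derivative computation $\phi'(x)=x^{\alpha-2}\bigl(\alpha x-(\alpha-1)\bigr)$, the same monotonicity split at $\tfrac{\alpha-1}{\alpha}$, and the same sign argument from $\phi(0)=-\tau_p^{\alpha}<0$ together with $\phi(x)\to+\infty$. For the lower bound, your induction scaffolding also matches the paper (reduce $\tau_{p+1}\ge\tfrac{p+\alpha}{\alpha}$ to the sign of $\phi$ at $\tfrac{p+\alpha}{\alpha}$, then invoke the inductive hypothesis), but you handle the crux inequality $p(p+\alpha)^{\alpha-1}\le(p+\alpha-1)^{\alpha}$ genuinely differently: the paper takes logarithms, introduces $f_2(p)=(\alpha-1)\ln\tfrac{p+\alpha}{\alpha}+\ln\tfrac{p}{\alpha}-\alpha\ln\tfrac{p+\alpha-1}{\alpha}$, shows $f_2'(p)=\tfrac{\alpha(\alpha-1)}{p(p+\alpha)(p+\alpha-1)}>0$, and concludes negativity from $\lim_{p\to\infty}f_2(p)=0$ (with an additional auxiliary function $f_3(\alpha)$ to check $f_2(1)<0$), whereas you observe that $p+\alpha-1=\tfrac{1}{\alpha}p+\tfrac{\alpha-1}{\alpha}(p+\alpha)$ and apply weighted AM--GM to $p$ and $p+\alpha$ with weights $\tfrac{1}{\alpha}$, $\tfrac{\alpha-1}{\alpha}$. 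Your route is shorter and purely algebraic: it avoids the limit argument and the two auxiliary functions, is uniform in $p$ and $\alpha$, and (since $p\neq p+\alpha$) even yields the strict inequality the paper states; the paper's calculus route is more mechanical but requires more bookkeeping. Both are valid, and your reduction step (using that $\phi$ is increasing to the right of $\tfrac{\alpha-1}{\alpha}$ and that $\tfrac{p+\alpha}{\alpha}\ge 1>\tfrac{\alpha-1}{\alpha}$) is sound.
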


\begin{proof}
For the first argument, we first show that $\tau_{p}>0$ for all positive integer $p\geq1$ with the aid of mathematical induction. Specifically, the base case $\tau_{1}>0$ holds since the given initial value $\tau_{1}=1$. Assume the induction hypothesis that $\tau_{p}>0$ holds. Then we have

\begin{small}
\begin{equation}\label{temp_1}
\tau_{p+1}^{\alpha}-\tau_{p+1}^{\alpha-1}-\tau_{p}^{\alpha}=0
\Rightarrow
\tau_{p+1}^{\alpha-1}(\tau_{p+1}-1)>0.
\end{equation}
\end{small}Since (\ref{temp_1}) holds for all $\alpha\in\{2,3,\cdots\}$, $\tau_{p+1}$ should be a positive value greater than one, therefore we have $\tau_{p+1}>0$. In this way, we conclude that $\tau_{p}>0$ for all $p\in\{1,2,\cdots\}$. To the uniqueness of positive real root, let

\begin{small}
\begin{equation}
f_{1}(\tau_{p+1})\triangleq\tau_{p+1}^{\alpha}-\tau_{p+1}^{\alpha-1}-\tau_{p}^{\alpha},
\end{equation}
\end{small}which has the derivative as

\begin{small}
\begin{equation}
f_{1}^{'}(\tau_{p+1})=\tau_{p+1}^{\alpha-2}(\alpha\tau_{p+1}-\alpha+1),
\end{equation}
\end{small}it has zero points $\tau_{p+1}=0$ and $\tau_{p+1}=\frac{\alpha-1}{\alpha}$. Therefore, $f_{1}(\tau_{p+1})$ monotonically decreases from $f_{1}(0)$ to $f_{1}(\frac{\alpha-1}{\alpha})$, and monotonically increases from $f_{1}(\frac{\alpha-1}{\alpha})$ to $f_{1}(+\infty)$. Since $f_{1}(0)=-\tau_{p}^{\alpha}<0$ and $\lim\limits_{\tau_{p+1}\rightarrow+\infty}f_{1}(\tau_{p+1})=+\infty$, the function $f_{1}(\tau_{p+1})$ has only one zero point, which implies that the equation (\ref{alpha_polynomial}) has the unique positive real root.

For the second argument, we still use the mathematical induction. The base case $\tau_{1}\geq1$ holds since the given initial value $\tau_{1}=1$. Assume the induction hypothesis that $\tau_{p}\geq\frac{p+\alpha-1}{\alpha}$ holds. To show $\tau_{p+1}\geq\frac{p+\alpha}{\alpha}$, we can equivalently prove that the inequality $f_{1}(\frac{p+\alpha}{\alpha})<0$ holds. Moreover, since the induction hypothesis $\tau_{p}\geq\frac{p+\alpha-1}{\alpha}>0$, we can prove the following inequality

\begin{small}
\begin{equation}
\begin{pmatrix}
\frac{p+\alpha}{\alpha}
\end{pmatrix}^{\alpha}
-
\begin{pmatrix}
\frac{p+\alpha}{\alpha}
\end{pmatrix}^{\alpha-1}
-
\begin{pmatrix}
\frac{p+\alpha-1}{\alpha}
\end{pmatrix}^{\alpha}
<0
\end{equation}
\end{small}holds and it is equivalent to show

\begin{small}
\begin{equation}
f_{2}(p)
\triangleq
(\alpha-1)\ln
\begin{pmatrix}
\frac{p+\alpha}{\alpha}
\end{pmatrix}
+
\ln
\begin{pmatrix}
\frac{p}{\alpha}
\end{pmatrix}
-
\alpha\ln\begin{pmatrix}
\frac{p+\alpha-1}{\alpha}
\end{pmatrix}
<0
\end{equation}
\end{small}holds for all $p\in\{1,2,\cdots\}$. The derivative of $f_{2}(p)$ is

\begin{small}
\begin{equation}
f_{2}^{'}(p)
=
\frac{\alpha(\alpha-1)}{p(p+\alpha)(p+\alpha-1)},
\end{equation}
\end{small}which implies that the function $f_{2}(p)$ monotonically increases from $f_{2}(1)$ to $f_{2}(+\infty)$. Next, we show that $f_{2}(1)<0$. Notice that

\begin{small}
\begin{equation}\label{f_2_1}
f_{2}(1)
=
(\alpha-1)\ln
\begin{pmatrix}
\frac{1+\alpha}{\alpha}
\end{pmatrix}
+
\ln
\begin{pmatrix}
\frac{1}{\alpha}
\end{pmatrix},
\end{equation}
\end{small}which is a function about $\alpha$, hence, denote (\ref{f_2_1}) as

\begin{small}
\begin{equation}
f_{3}(\alpha)
\triangleq
(\alpha-1)\ln(\alpha+1)
-
\alpha\ln\alpha.
\end{equation}
\end{small}The first and second derivatives of $f_{3}(\alpha)$ are

\begin{small}
\begin{subequations}
\begin{align}
f_{3}^{'}(\alpha)
&=
\ln
\begin{pmatrix}
\frac{\alpha+1}{\alpha}
\end{pmatrix}
-
\frac{2}{\alpha+1},
\\
f_{3}^{''}(\alpha)
&=
\frac{\alpha-1}{\alpha(\alpha+1)^{2}},
\end{align}
\end{subequations}
\end{small}which implies that $f_{3}(\alpha)$ monotonically
increases for $\small \alpha\in\{2,3,\cdots\}$. Since $f_{3}(2)<0$ and $\lim\limits_{\alpha\rightarrow+\infty}f_{3}(\alpha)=0$, we conclude that $f_{2}(1)<0$. Finally, according to $\lim\limits_{p\rightarrow+\infty}f_{2}(p)=0$, the unique positive real root $\tau_{p}$ has the lower bound $\frac{p+\alpha-1}{\alpha}$.
\end{proof}


For the purpose of saving computing time, the $\alpha$th-order equations are solved offline and the roots are stored in a table. In this work, the look-up table is obtained by recursively solving the polynomial equation (\ref{alpha_polynomial}) in MATLAB environment, which can be summarized as Algorithm \ref{algorithm2}. Notice that the MATLAB function $\text{roots}(\cdot)$ is used for the polynomial root seeking. The following theorem show that the convergence rate can be improved to $O(1/p^{\alpha})$ by using (\ref{alpha_polynomial}).

\begin{theorem}\label{Theorem_2}
For $\alpha\in\{2,3,\cdots\}$, let $\xi^{*}$ and $\mu^{*}$ denote the optimizers of the problems (\ref{standard_QP}) and (\ref{dual_problem}) respectively, the convergence rate of the primal variable by Algorithm \ref{algorithm1} is

\begin{small}
\begin{equation}\label{convergence_rate_general}
\|\xi^{p}-\xi^{*}\|_{2}^{2}
\leq
\frac{\alpha^{\alpha}L\|\mu^{0}-\mu^{*}\|_{2}^{2}}{\underline{\sigma}(\mathcal{H})(p+\alpha-1)^{\alpha}},\ p=1,2,\cdots
\end{equation}
\end{small}where $\small \underline{\sigma}(\cdot)$ denotes the minimum eigenvalue.

\end{theorem}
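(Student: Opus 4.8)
The plan is to run the potential-function analysis behind FISTA \cite{Beck_2009} with the new parameter recursion (\ref{alpha_polynomial}) in place of (\ref{2_polynomial}) to control the \emph{dual} objective values along Algorithm \ref{algorithm1}, and then convert that into a bound on the \emph{primal} iterates $\xi^{p}$ using the strong convexity of the primal cost ($\mathcal{H}\succ0$, implicit in the use of $\mathcal{H}^{-1}$). Throughout write $F=f+g$, $F^{\star}=F(\mu^{*})=f(\mu^{*})$ (since $\mu^{p},\mu^{*}\geq0$, so $g$ vanishes at both), $v_{p}:=F(\mu^{p})-F^{\star}\geq0$, and introduce the auxiliary vectors $b_{p}:=\tau_{p}\mu^{p}-(\tau_{p}-1)\mu^{p-1}-\mu^{*}$.

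\textbf{Step 1 (dual value decay).} The workhorse is the proximal-gradient descent inequality: for every $\mu$ and $\zeta$, with $\mu^{+}=\mathrm{P}_{\mu}(\zeta-\tfrac{1}{L}\nabla f(\zeta))$, one has $F(\mu)-F(\mu^{+})\geq\tfrac{L}{2}\|\mu^{+}-\zeta\|_{2}^{2}+L\langle\zeta-\mu,\mu^{+}-\zeta\rangle$. Apply it with $\zeta=\zeta^{p+1}$ at $\mu=\mu^{p}$ and at $\mu=\mu^{*}$, combine with weights $\tau_{p+1}-1$ and $1$, and multiply through by $\tau_{p+1}^{\alpha-1}$. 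The momentum step $\zeta^{p+1}=\mu^{p}+\tfrac{\tau_{p}-1}{\tau_{p+1}}(\mu^{p}-\mu^{p-1})$ reorganizes the cross terms into $\langle\,\cdot\,,b_{p}\rangle$, and (\ref{alpha_polynomial}) in the form $\tau_{p+1}^{\alpha-1}(\tau_{p+1}-1)=\tau_{p}^{\alpha}$ turns the left side into $\tau_{p}^{\alpha}v_{p}-\tau_{p+1}^{\alpha}v_{p+1}$; completing the square via $\|a-c\|_{2}^{2}+2\langle a-c,c\rangle=\|a\|_{2}^{2}-\|c\|_{2}^{2}$ then yields a one-step recursion for the potential $\Phi_{p}:=\tfrac{2}{L}\tau_{p}^{\alpha}v_{p}+\|b_{p}\|_{2}^{2}$. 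Telescoping from $p=1$, and bounding $\Phi_{1}$ by one more use of the descent inequality at $\zeta^{1}=\mu^{0}$ (which gives $\tfrac{2}{L}v_{1}+\|\mu^{1}-\mu^{*}\|_{2}^{2}\leq\|\mu^{0}-\mu^{*}\|_{2}^{2}$ and $b_{1}=\mu^{1}-\mu^{*}$), leads to $v_{p}\leq L\|\mu^{0}-\mu^{*}\|_{2}^{2}/(2\tau_{p}^{\alpha})$. Inserting the lower bound $\tau_{p}\geq(p+\alpha-1)/\alpha$ from Lemma \ref{lemma1}(2) gives $v_{p}\leq\alpha^{\alpha}L\|\mu^{0}-\mu^{*}\|_{2}^{2}/\big(2(p+\alpha-1)^{\alpha}\big)$.

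\textbf{Step 2 (dual gap $\Rightarrow$ primal iterate error).} Since $\mathcal{H}\succ0$, $f$ is a convex quadratic with constant Hessian $\mathcal{A}\mathcal{H}^{-1}\mathcal{A}^{T}$, so an exact Taylor expansion at $\mu^{*}$ gives $f(\mu^{p})-f(\mu^{*})=\nabla f(\mu^{*})^{T}(\mu^{p}-\mu^{*})+\tfrac{1}{2}\|\mathcal{H}^{-1/2}\mathcal{A}^{T}(\mu^{p}-\mu^{*})\|_{2}^{2}$. As $\mu^{*}$ minimizes $F=f+g$ with $g$ the indicator of $\{\mu\geq0\}$, first-order optimality yields $\nabla f(\mu^{*})^{T}(\mu^{p}-\mu^{*})\geq0$ (because $\mu^{p}\geq0$), hence $v_{p}\geq\tfrac{1}{2}\|\mathcal{H}^{-1/2}\mathcal{A}^{T}(\mu^{p}-\mu^{*})\|_{2}^{2}$. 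From $\xi=\mathcal{H}^{-1}(-\mathcal{A}^{T}\mu-\mathcal{G})$ we get $\xi^{p}-\xi^{*}=-\mathcal{H}^{-1}\mathcal{A}^{T}(\mu^{p}-\mu^{*})$, and $\|\mathcal{H}^{-1}z\|_{2}^{2}\leq\underline{\sigma}(\mathcal{H})^{-1}\|\mathcal{H}^{-1/2}z\|_{2}^{2}$ for every $z$; combining, $\|\xi^{p}-\xi^{*}\|_{2}^{2}\leq 2v_{p}/\underline{\sigma}(\mathcal{H})$. Chaining with Step 1 gives (\ref{convergence_rate_general}).

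\textbf{Main obstacle.} Closing the telescope in Step 1 is the crux. For $\alpha=2$ it is exactly the Beck--Teboulle argument, using (\ref{2_polynomial}) together with $\tau_{p+1}(\tau_{p+1}-1)\leq\tau_{p}^{2}$. For $\alpha\geq3$, multiplying the combined descent inequality by $\tau_{p+1}^{\alpha-1}$ — the weight forced by (\ref{alpha_polynomial}) — leaves the squared-step term and the cross term carrying unequal powers of $\tau_{p+1}$, so after completing the square one only obtains $\tau_{p}^{\alpha}v_{p}-\tau_{p+1}^{\alpha}v_{p+1}\geq\tfrac{L}{2}\tau_{p+1}^{\alpha-2}\big(\|b_{p+1}\|_{2}^{2}-\|b_{p}\|_{2}^{2}\big)$, which does not telescope verbatim into $\Phi_{p}$. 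Absorbing the residual factor $\tau_{p+1}^{\alpha-2}$ — e.g.\ by using a suitably $\tau_{p}$-weighted norm for $b_{p}$ in the potential and exploiting the monotonicity $\tau_{p+1}>\tau_{p}\geq1$ from Lemma \ref{lemma1} (and, if needed, monotonicity of $\|b_{p}\|_{2}$), or via an Abel-summation estimate in the telescoping — is where the real work lies, and is also what must be handled carefully to land exactly the constant $\alpha^{\alpha}$ rather than a larger one. By comparison, Step 2 and the substitution of Lemma \ref{lemma1}(2) are routine.
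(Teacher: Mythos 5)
Your Step 2 is correct and is exactly how the paper finishes (it invokes Theorem 3 of \cite{Giselsson_2013} for $\|\xi^{p}-\xi^{*}\|_{2}^{2}\leq\tfrac{2}{\underline{\sigma}(\mathcal{H})}(f(\mu^{p})-f(\mu^{*}))$), and your Step 1 is the same Beck--Teboulle potential argument the paper runs. But the ``main obstacle'' you isolate is not a detail you failed to push through; it is a genuine gap, and the paper does not close it either. As you note, multiplying the combined descent inequality (\ref{Most_Important_Ieq_1}) \emph{through} by the factor $\tau_{p+1}^{\alpha-1}$ dictated by (\ref{alpha_polynomial}) gives
\begin{equation*}
\frac{2}{L}\bigl[\tau_{p}^{\alpha}\upsilon^{p}-\tau_{p+1}^{\alpha}\upsilon^{p+1}\bigr]\;\geq\;\tau_{p+1}^{\alpha-2}\bigl(\|\kappa_{p+1}\|_{2}^{2}-\|\kappa_{p}\|_{2}^{2}\bigr),
\end{equation*}
and the residual factor $\tau_{p+1}^{\alpha-2}$ destroys the telescoping. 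What the paper does instead is multiply the left-hand side of (\ref{Most_Important_Ieq_1}) by $\tau_{p+1}^{\alpha-1}$ and the right-hand side by the smaller factor $\tau_{p+1}$ to obtain (\ref{Most_Important_Ieq_2}). Multiplying the two sides of an inequality by different positive factors preserves it only if one of the sides has a known favorable sign, and neither does here: the left-hand side $(\tau_{p+1}-1)\upsilon^{p}-\tau_{p+1}\upsilon^{p+1}$ is typically negative (it equals $-\upsilon^{p}$ when $\upsilon^{p+1}=\upsilon^{p}$), and the right-hand side is $\|\kappa_{p+1}\|_{2}^{2}-\|\kappa_{p}\|_{2}^{2}$ up to positive scaling, which has no fixed sign. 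Equivalently, passing from the display above to the paper's telescoping form would require $\|\kappa_{p+1}\|_{2}\geq\|\kappa_{p}\|_{2}$ for all $p$, which is never established and cannot hold in general, since $\|\kappa_{p}\|_{2}^{2}$ is simultaneously claimed to stay bounded by $\|\mu^{0}-\mu^{*}\|_{2}^{2}$.

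Moreover, no repair of your Step 1 can exist at this level of generality. Algorithm \ref{algorithm1} performs one gradient and one projection per iteration, and for such first-order schemes the $\Omega\bigl(L\|\mu^{0}-\mu^{*}\|_{2}^{2}/p^{2}\bigr)$ lower bound on the objective gap for smooth convex minimization is attained by convex quadratics of exactly the form the dual $f$ takes here (for iteration counts up to the problem dimension). The intermediate claim $f(\mu^{p})-f(\mu^{*})\leq\alpha^{\alpha}L\|\mu^{0}-\mu^{*}\|_{2}^{2}/(2(p+\alpha-1)^{\alpha})$ with $\alpha>2$ would contradict that bound. So your refusal to wave the telescoping through is the right call: the theorem as stated cannot be proved, and any correct variant must either modify the algorithm (more than reweighting $\tau_{p}$) or restrict the problem class, e.g.\ by exploiting strong convexity of the dual.
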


\begin{proof}
Let $\small \upsilon^{p}=f(\mu^{p})-f(\mu^{*})$, according to Lemma 2.3 in \cite{Beck_2009}, we have

\begin{small}
\begin{subequations}
\begin{align}
\begin{split}
\frac{2}{L}(\upsilon^{p}-\upsilon^{p+1})
&\geq\|\mu^{p+1}-\zeta^{p+1}\|_{2}^{2}\\
&+2\langle\mu^{p+1}-\zeta^{p+1},\zeta^{p+1}-\mu^{p}\rangle,
\end{split}\label{lemma_4_1_1} \\
\begin{split}
-\frac{2}{L}\upsilon^{p+1}
&\geq\|\mu^{p+1}-\zeta^{p+1}\|_{2}^{2}\\
&+2\langle\mu^{p+1}-\zeta^{p+1},\zeta^{p+1}-\mu^{*}\rangle.
\end{split}\label{lemma_4_1_2}
\end{align}
\end{subequations}
\end{small}Follow the line of Lemma 4.1 in \cite{Beck_2009}, multiply $\small (\tau_{p+1}-1)$ to the both sides of (\ref{lemma_4_1_1}) and add the result to (\ref{lemma_4_1_2}), which leads to

\begin{small}
\begin{equation}\label{Most_Important_Ieq_1}
\begin{split}
\frac{2}{L}
\begin{bmatrix}
(\tau_{p+1}-1)\upsilon^{p}-\tau_{p+1}\upsilon^{p+1}
\end{bmatrix}\geq
\tau_{p+1}\|\mu^{p+1}-\zeta^{p+1}\|_{2}^{2}\\
+2\langle\mu^{p+1}-\zeta^{p+1},\tau_{p+1}\zeta^{p+1}-(\tau_{p+1}-1)\mu^{p}-\mu^{*}\rangle.
\end{split}
\end{equation}
\end{small}Based on the second argument of Lemma \ref{lemma1}, we can obtain $\small \tau_{p+1}\geq1,\ \forall p\geq1$, then multiply $\small \tau_{p+1}^{\alpha-1}$ and $\small \tau_{p+1}$ to the left and right-hand side of (\ref{Most_Important_Ieq_1}), respectively, we have

\begin{small}
\begin{equation}\label{Most_Important_Ieq_2}
\begin{split}
\frac{2}{L}
\begin{bmatrix}
\tau_{p+1}^{\alpha-1}(\tau_{p+1}-1)\upsilon^{p}-\tau_{p+1}^{\alpha}\upsilon^{p+1}
\end{bmatrix}\geq
\|\tau_{p+1}(\mu^{p+1}-\zeta^{p+1})\|_{2}^{2}\\
+2\tau_{p+1}\langle\mu^{p+1}-\zeta^{p+1},\tau_{p+1}\zeta^{p+1}-(\tau_{p+1}-1)\mu^{p}-\mu^{*}\rangle.
\end{split}
\end{equation}
\end{small}Let $\small y_{1}=\tau_{p+1}\zeta^{p+1}$, $\small y_{2}=\tau_{p+1}\mu^{p+1}$ and $\small y_{3}=(\tau_{p+1}-1)\mu^{p}+\mu^{*}$, the right-hand side of (\ref{Most_Important_Ieq_2}) can be written as

\begin{small}
\begin{equation}
\|y_{2}-y_{1}\|_{2}^{2}+2\langle y_{2}-y_{1},y_{1}-y_{3}\rangle
=
\|y_{2}-y_{3}\|_{2}^{2}
-\|y_{1}-y_{3}\|_{2}^{2}.
\end{equation}
\end{small}Since $\small \tau_{p}^{\alpha}=\tau_{p+1}^{\alpha}-\tau_{p+1}^{\alpha-1}$, the inequality (\ref{Most_Important_Ieq_2}) is equivalent to

\begin{small}
\begin{equation}\label{Most_Important_Ieq_3}
\begin{split}
\frac{2}{L}
\begin{bmatrix}
\tau_{p}^{\alpha}\upsilon^{p}-\tau_{p+1}^{\alpha}\upsilon^{p+1}
\end{bmatrix}&\geq
\|y_{2}-y_{3}\|_{2}^{2}
-\|y_{1}-y_{3}\|_{2}^{2} \\
&=
\|\tau_{p+1}\mu^{p+1}-(\tau_{p+1}-1)\mu^{p}-\mu^{*}\|_{2}^{2}\\
&-
\|\tau_{p+1}\zeta^{p+1}-(\tau_{p+1}-1)\mu^{p}-\mu^{*}\|_{2}^{2}.
\end{split}
\end{equation}
\end{small}Let $\small \kappa_{p}=\tau_{p}\mu^{p}-(\tau_{p}-1)\mu^{p-1}-\mu^{*}$, combine with $\small \tau_{p+1}\zeta^{p+1}=\tau_{p+1}\mu^{p}+(\tau_{p}-1)(\mu^{p}-\mu^{p-1})$, the right-hand side of (\ref{Most_Important_Ieq_3}) is equal to $\small \|\kappa_{p+1}\|_{2}^{2}-\|\kappa_{p}\|_{2}^{2}$. Therefore, similar as Lemma 4.1 in \cite{Beck_2009}, we have the following conclusion

\begin{small}
\begin{equation}
\frac{2}{L}\tau_{p}^{\alpha}\upsilon^{p}
-\frac{2}{L}\tau_{p+1}^{\alpha}\upsilon^{p+1}\geq
\|\kappa_{p+1}\|_{2}^{2}-\|\kappa_{p}\|_{2}^{2}.
\end{equation}
\end{small}According to Lemma 4.2 in \cite{Beck_2009}, let $\small \bar{y}_{1}^{p}=\frac{2}{L}\tau_{p}^{\alpha}\upsilon^{p}$, $\small \bar{y}_{2}^{p}=\|\kappa_{p}\|_{2}^{2}$ and $\small \bar{y}_{3}=\|\mu^{0}-\mu^{*}\|_{2}^{2}$, we have $\small \bar{y}_{1}^{p}+\bar{y}_{2}^{p}\geq\bar{y}_{1}^{p+1}+\bar{y}_{2}^{p+1}$. Assume $\small \bar{y}_{1}^{1}+\bar{y}_{2}^{1}\leq\bar{y}_{3}$ holds, we have $\small \bar{y}_{1}^{p}+\bar{y}_{2}^{p}\leq\bar{y}_{3}$, which leads to $\small \bar{y}_{1}^{p}\leq\bar{y}_{3}$. Moreover, according to the second argument of Lemma \ref{lemma1}, we have

\begin{small}
\begin{equation*}
\begin{split}
\frac{2}{L}\tau_{p}^{\alpha}\upsilon^{p}\leq\|\mu^{0}-\mu^{*}\|_{2}^{2}
\Rightarrow
f(\mu^{p})-f(\mu^{*})
\leq
\frac{\alpha^{\alpha}L\|\mu^{0}-\mu^{*}\|_{2}^{2}}{2(p+\alpha-1)^{\alpha}}.
\end{split}
\end{equation*}
\end{small}The proof of the assumption $\small \bar{y}_{1}^{1}+\bar{y}_{2}^{1}\leq\bar{y}_{3}$ can be found in Theorem 4.4 of \cite{Beck_2009}. Then, according to the procedures in Theorem 3 of \cite{Giselsson_2013}, we conclude that

\begin{small}
\begin{equation}
\|\xi^{p}-\xi^{*}\|_{2}^{2}\leq
\frac{2}{\underline{\sigma}(\mathcal{H})}(f(\mu^{p})-f(\mu^{*}))
\leq
\frac{\alpha^{\alpha}L\|\mu^{0}-\mu^{*}\|_{2}^{2}}{\underline{\sigma}(\mathcal{H})(p+\alpha-1)^{\alpha}}.
\end{equation}
\end{small}In this way, the convergence rate (\ref{convergence_rate_general}) is obtained.
\end{proof}

\begin{algorithm}[t]
\caption{Look-up table generation for the $\alpha$th-order polynomial equation (\ref{alpha_polynomial}).}
\label{algorithm2}
\begin{algorithmic}[1]
\REQUIRE ~~\\
The order $\alpha\geq2$, the initial root $\tau_{1}=1$, the initial iteration index $p=1$ and the table length $\mathcal{P}$.
\ENSURE ~~Look-up table $\mathcal{T}_{\alpha}$. \\
\STATE Look-up table initialization: $\mathcal{T}_{\alpha}=[\tau_{1}]$.\\

\WHILE {$p\leq\mathcal{P}$}
\STATE Polynomial coefficients: $\boldsymbol{p_c}=[1,-1,\text{zeros}(1, \alpha-2),-\mathcal{T}_{\alpha}(\text{end})^{\alpha}]$.\\
\STATE Polynomial roots: $\boldsymbol{p_r}=\text{roots}(\boldsymbol{p_c})$.\\
\STATE Finding the positive real root $\tau_{p+1}$ in the vector $\boldsymbol{p_r}$.\\
\STATE Updating the look-up table: $\mathcal{T}_{\alpha} =[\mathcal{T}_{\alpha},\tau_{p+1}]$.\\
\STATE $\small p=p+1.$
\ENDWHILE
\end{algorithmic}
\end{algorithm}

\begin{figure}[hbtp]
\centering
\includegraphics[scale=0.53]{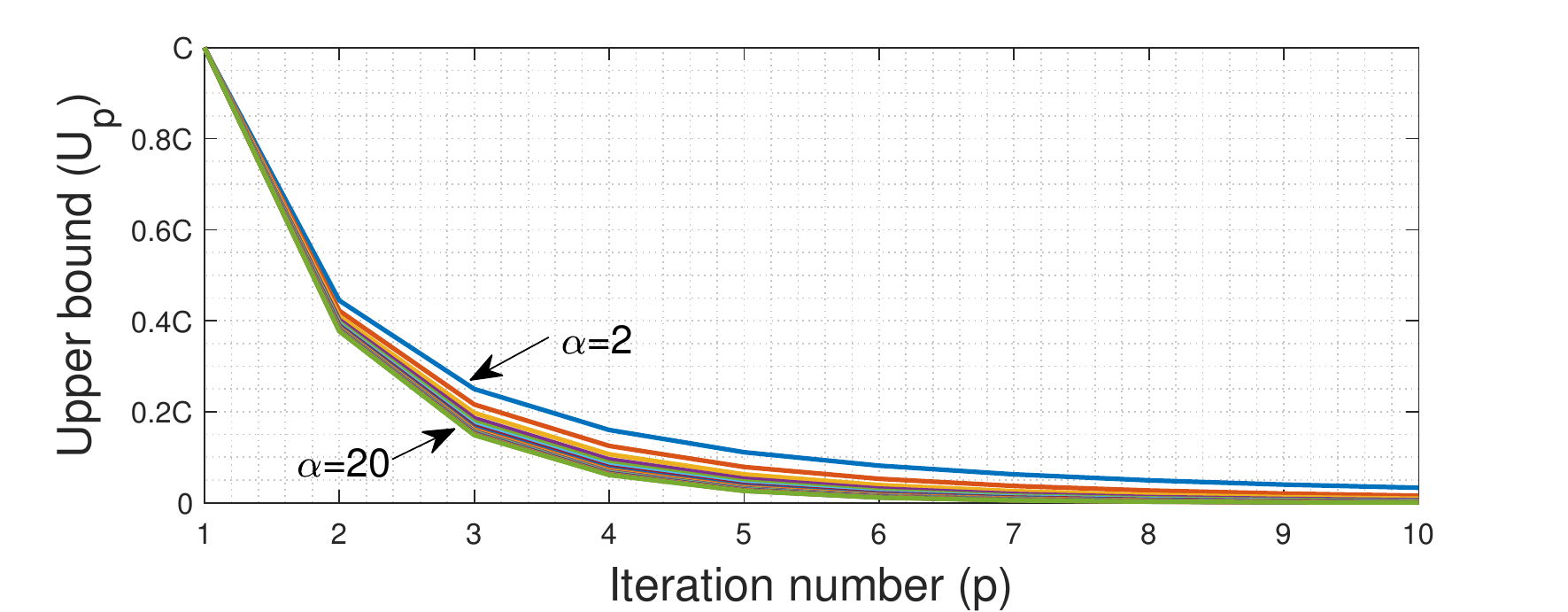}
\caption{Right-hand side of (\ref{convergence_rate_general}) with the variation of $\small \alpha$.}
\label{fig_upper_bound_rate}
\end{figure}

Theorem \ref{Theorem_2} shows that the FISTA in \cite{Beck_2009} is a special case of the proposed method and the iteration performance is determined by (\ref{alpha_polynomial}). Specifically, a suitable selection of the iterative parameter $\small \tau_{p}$ can improve the convergence rate, i.e., from $\small O(1/p^{2})$ in \cite{Beck_2009} to $\small O(1/p^{\alpha})$. To show the upper bound of the convergence rate can be reduced, denote the right-hand side of (\ref{convergence_rate_general}) as

\begin{small}
\begin{equation}\label{upper_bound}
Up
=
\frac{\alpha^{\alpha}}{(p+\alpha-1)^{\alpha}}C,\ p=1,2,\cdots
\end{equation}
\end{small}where $C$ is the constant part of the right-hand side of (\ref{convergence_rate_general}). The variation of $U_{p}$ with $p\in\{1,\cdots,10\}$ is shown in Fig. \ref{fig_upper_bound_rate}, in which different color lines denote different $\alpha\in\{2,\cdots,20\}$. Fig. \ref{fig_upper_bound_rate} implies that $U_{p}$ is decreasing with the increase of $\alpha$.




\subsection{Cholesky Decomposition of $\mathcal{H}$}

According to the QP in Section \ref{section2}, the quadratic objective term $\mathcal{H}$ in (\ref{standard_QP}) may be a dense matrix, then more computation time could be consumed than a banded matrix if solving (\ref{standard_QP}) by Algorithm \ref{algorithm1} directly. To cope with this difficulty, the matrix decomposition technique can be used. Since $\mathcal{H}$ is symmetric and positive definite, there exists the Cholesky decomposition $\mathcal{H}=\mathcal{Z}^{T}\mathcal{Z}$, based on which, the quadratic programming problem (\ref{standard_QP}) can be formulated into

\begin{small}
\begin{equation}\label{compact_original_problem3}
\begin{split}
&\min\limits_{\psi}
\frac{1}{2}
\psi^{T}I\psi
+
\mathcal{G}^{T}\mathcal{Z}^{-1}\psi
\\
&\ s.t.\ \mathcal{A}\mathcal{Z}^{-1}\psi\leq\mathcal{B},
\end{split}
\end{equation}
\end{small}where $\psi=\mathcal{Z}\xi$. Since $\mathcal{Z}$ is a upper triangular matrix with real and positive diagonal components, (\ref{compact_original_problem3}) can be solved by Algorithm \ref{algorithm1} and the control input can be calculated by $\xi=\mathcal{Z}^{-1}\psi$. In this way, the quadratic objective term is transformed into the identity matrix, which can reduce the computation time in step $4$ of Algorithm \ref{algorithm1}.

\begin{table*}[t]
\centering
\caption{Iteration performance with four methods.}
\begin{tabular}{lcccccccc}
\toprule
& \multicolumn{2}{c}{{\ul \ \ \ \ \ \ \ \ \ \ \ \ n=m=2\ \ \ \ \ \ \ \ \ \ \ }}
& \multicolumn{2}{c}{{\ul \ \ \ \ \ \ \ \ \ \ \ \ \ n=m=4\ \ \ \ \ \ \ \ \ \ \ }}
& \multicolumn{2}{c}{{\ul \ \ \ \ \ \ \ \ \ \ \ \ \ n=m=6\ \ \ \ \ \ \ \ \ \ \ }}
& \multicolumn{2}{c}{{\ul \ \ \ \ \ \ \ \ \ \ \ \ \ n=m=8\ \ \ \ \ \ \ \ \ \ }}
\\
& \multicolumn{2}{c}{{\ul \ \ \ \ \ \ vars/cons: 10/40\ \ \ \ \ \ }}
& \multicolumn{2}{c}{{\ul \ \ \ \ \ \ vars/cons: 20/80\ \ \ \ \ \ }}
& \multicolumn{2}{c}{{\ul \ \ \ \ \ \ vars/cons: 30/120\ \ \ \ \ \ }}
& \multicolumn{2}{c}{{\ul \ \ \ \ \ \ vars/cons: 40/160\ \ \ \ \ \ }}
\\
& \multicolumn{1}{l}{\ \ ave.iter}
& \multicolumn{1}{l}{\ \ ave.time (s)}
& \multicolumn{1}{l}{\ \ ave.iter}
& \multicolumn{1}{l}{\ \ ave.time (s)}
& \multicolumn{1}{l}{\ \ ave.iter}
& \multicolumn{1}{l}{\ \ ave.time (s)}
& \multicolumn{1}{l}{\ \ ave.iter}
& \multicolumn{1}{l}{\ \ ave.time (s)} \\
\midrule
MOSEK
& --                                     & 0.10149                           & --                                     & 0.10226                            & --                                     & 0.10873                            & --                                     & 0.10887                            \\
ECOS
& --                                     & 0.00452                            & --                                     & 0.00659                            & --                                     & 0.00849                            & --                                     & 0.01287                            \\
FISTA
& 29.37                                     & 0.00098                            & 115.95                                     & 0.00398                            & 159.76                                     & 0.00800                            & 272.88                                     & 0.01777                            \\
Algorithm \ref{algorithm1} ($\alpha=20$)
& 26.56                                     & 0.00078                            & 78.15                                     & 0.00251                            & 119.03                                     & 0.00484                            & 176.00                                     & 0.00785                            \\
\bottomrule
\end{tabular}\label{table2}
\end{table*}

\section{Performance Analysis based on MPC}

\label{section4}

\subsection{Formulation of Standard MPC}

Consider the discrete-time linear system as

\begin{small}
\begin{equation}\label{state_equation}
x_{k+1}=Ax_{k}+Bu_{k},
\end{equation}
\end{small}where $A$ and $B$ are known time-invariant matrixes. $x_{k}\in\mathcal{R}^{n}$ and $u_{k}\in\mathcal{R}^{m}$ have linear constraints as $Fx_{k}\leq\boldsymbol{1}$ and $Gu_{k}\leq\boldsymbol{1}$, respectively, in which $F\in\mathcal{R}^{f\times n}$, $G\in\mathcal{R}^{g\times m}$ and $\boldsymbol{1}$ is a vector with each component is equal to $1$. The standard MPC problem can be presented as

\begin{small}
\begin{equation}\label{original_MPC}
\min \limits_{\boldsymbol{u}_{k}}
J(x_{k},\ \boldsymbol{u}_{k}),\ \
\text{s.t.}\ \boldsymbol{u}_{k}\in \mathbb{U},
\end{equation}
\end{small}where $x_{k}$ is the current state, the decision variables is the nominal input trajectory $\boldsymbol{u}_{k}=(u_{0|k},\cdots,u_{N-1|k})\in\mathcal{R}^{Nm}$, $N$ is the prediction horizon. The construction of $\mathbb{U}$ can be found in \cite{Mayne_2000}. Moreover, the cost function $J(x_{k},\boldsymbol{u}_{k})$ is

\begin{small}
\begin{equation}\label{original_cost_function}
J(x_{k},\boldsymbol{u}_{k})
=
\frac{1}{2}
\sum_{l=0}^{N-1}
\begin{bmatrix}
\|x_{l|k}\|_{Q}^{2}
+
\|u_{l|k}\|_{R}^{2}
\end{bmatrix}
+
\frac{1}{2}
\|x_{N|k}\|_{P}^{2},
\end{equation}
\end{small}where $l|k$ denotes the $l$-th step ahead prediction from the current time $k$. $Q$, $R$ and $P$ are positive definite matrices. $P$ is chosen as the solution of the discrete algebraic Riccati equation of the unconstrained problem. The standard MPC problem (\ref{original_MPC}) can be formulated as the QP problem (\ref{standard_QP}), which has been shown in Appendix \ref{appendices}.

\subsection{Existing Methods for Comparison}

The performance comparisons with the optimization software MOSEK \cite{Erling2003}, the embedded solver ECOS \cite{Domahidi2013ecos} and the FISTA \cite{Beck_2009} have been provided. The MOSEK and ECOS quadratic programming functions in MATLAB environment, i.e., $\text{mskqpopt}(\cdot)$ and $\text{ecosqp}(\cdot)$, are used, they are invoked as

\begin{small}
\begin{subequations}
\begin{align}
\begin{split}
&[\text{sol}]
=
\text{mskqpopt}
(\mathcal{H},
\mathcal{G},
\mathcal{A},
[\ ],
\mathcal{B},
[\ ],
[\ ],
[\ ],
\text{'minimize info');}\\
&\text{time}
=
\text{sol.info.MSK\_DINF\_INTPNT\_TIME;}
\end{split}
\\
\nonumber
\\
&[\text{sol},
\sim,
\sim,
\sim,
\sim,
\text{time}]
=
\text{ecosqp}
(\mathcal{H},
\mathcal{G},
\mathcal{A},
\mathcal{B});
\end{align}
\end{subequations}
\end{small}The version of MOSEK is 9.2.43 and the numerical experiments are proceeded by running MATLAB R2018a on Windows 10 platform with 2.9G Core i5 processor and 8GB RAM.

\subsection{Performance Evaluation of Algorithm \ref{algorithm1}}

Four kinds of system scales are considered, they are $n=m=2,\ 4,\ 6,\ 8$. The performance of above methods are evaluated by solving $400$ random MPC problems in each system scale. Since we develop the efficient solving method in one control step, without loss of generality, a batch of stable and controllable plants with the random initial conditions and constraints are used. The components in the dynamics and input matrices are randomly selected from the interval $[-1,1]$. Each component in the state and input are upper and lower bounded by random bounds generated from intervals $[1,10]$ and $[-10,-1]$ respectively. The prediction horizon is $\small N=5$, the controller parameters are $\small Q=I$ and $\small R=10I$. Only the iteration process in the first control step is considered and the stop criterion is $\|\xi^{p}-\xi^{p-1}\|_{2}\leq10^{-3}$. Let $\alpha=20$ in Algorithm \ref{algorithm1}, the results are shown in Table \ref{table2}, in which "ave.iter" and "ave.time" are the abbreviations of "average iteration number" and "average execution time", and "vars/cons" denotes the number of variables and constraints. Table \ref{table2} implies that the average execution time can be reduced by using the proposed method. Noticing that Table \ref{table2} shows that the execution time of Algorithm \ref{algorithm1} and ECOS are much faster than MOSEK, hence, only the discussions about Algorithm \ref{algorithm1} and ECOS are provided in the rest of the letter for the purpose of conciseness.

\begin{figure}[hbtp]
\centering
\includegraphics[scale=0.53]{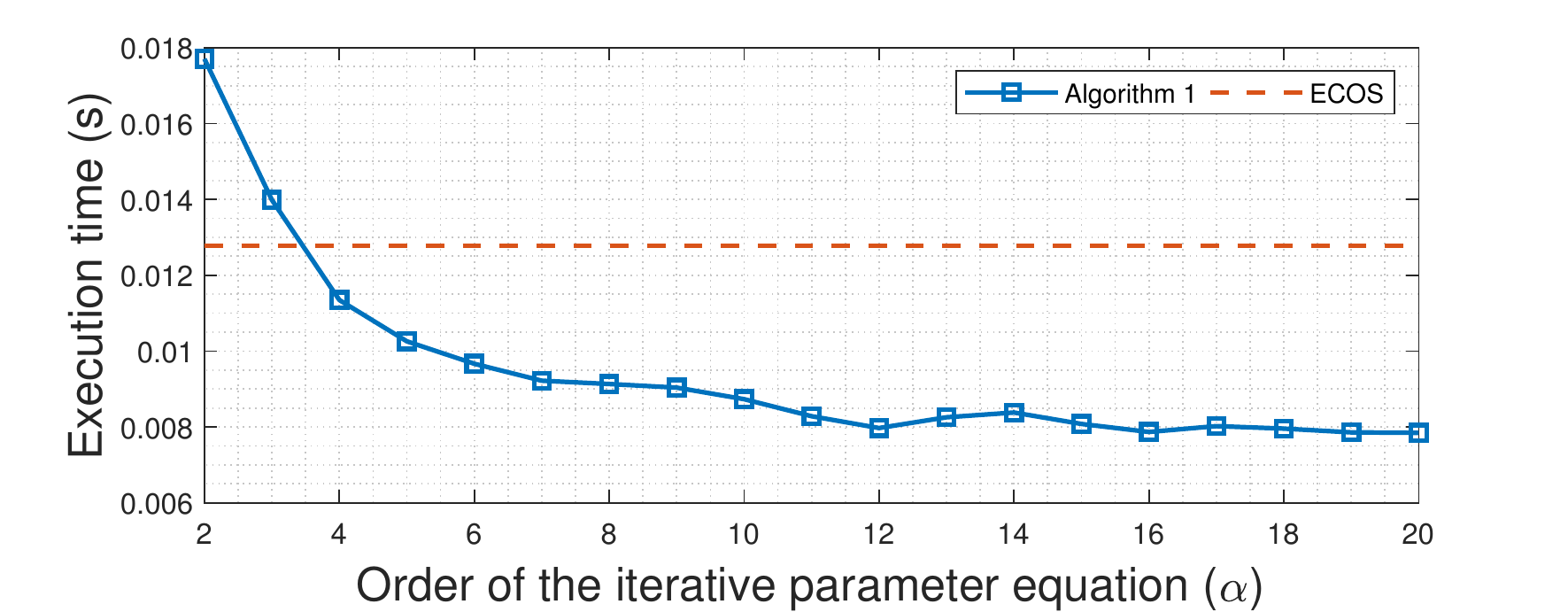}
\caption{Average execution time of Algorithm \ref{algorithm1} and ECOS in the case of $\small n=m=8$.}
\label{fig_exc_time}
\end{figure}

To show the performance improvement of Algorithm \ref{algorithm1} with the increase of $\small \alpha\in\{2,\cdots,20\}$, an example in the case of $\small n=m=8$ is given in Fig. \ref{fig_exc_time}, which presents the results in terms of the average execution time. Since only the upper bound of convergence rate is reduced by increasing $\small\alpha$, the execution time may not strictly decline. Fig. \ref{fig_exc_time} implies that the execution time of Algorithm \ref{algorithm1} can be shorten by increasing $\small\alpha$ and faster than the ECOS for solving the same MPC optimization problem. Notice that there is no significant difference in the execution time if $\small\alpha$ keeps increasing. In fact, it depends on the stop criterion, therefore, a suitable $\small\alpha$ can be selected according to the required solution accuracy.

\begin{figure}[hbtp]
\centering
\includegraphics[scale=0.54]{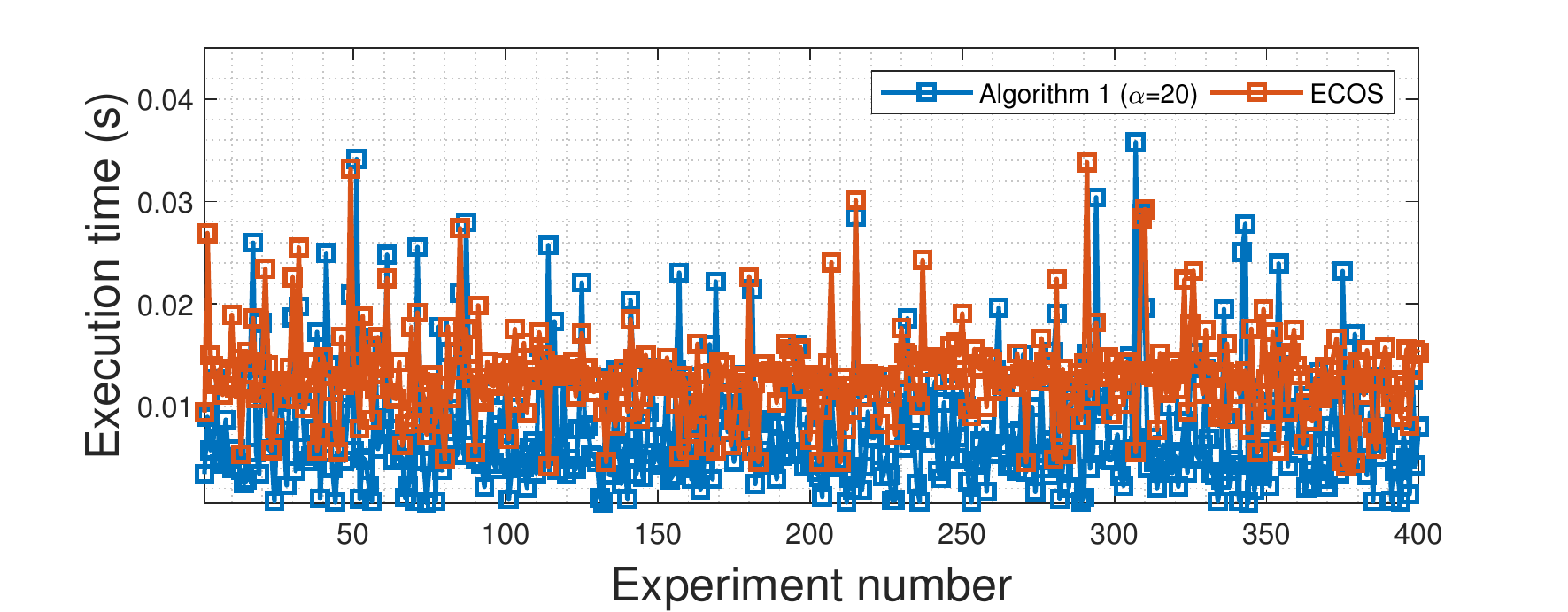}
\caption{Execution time for each experiment in the case of $n=m=8$.}
\label{fig_exc_time_8}
\end{figure}

\subsection{Statistical Significance of Experimental Result}

Table \ref{table2} verifies the effectiveness of Algorithm \ref{algorithm1} by using the average execution time, the statistical significance is discussed as follows. Since the sample size is large in our test, i.e., $400$ random experiments in each case, the paired $t$-test developed in Section $10.3$ and $12.3$ of \cite{Wackerly2008} can be used. Denote the average execution time under the ECOS and Algorithm \ref{algorithm1} as $\mu_e$ and $\mu_a$, and the difference of execution time between the two methods as $D_{i}$ for $i=1,\cdots,M$, in which $M=400$. If the average execution time for the ECOS is larger, then $\mu_{D}=\mu_{e}-\mu_{a}>0$. Thus, we test

\begin{small}
\begin{equation*}
\text{H}_{0}:\ \mu_{D}=0\ \
\text{versus}\ \
\text{H}_{1}:\ \mu_{D}>0.
\end{equation*}
\end{small}Define the sample mean and variance as

\begin{small}
\begin{equation*}
\bar{D}
=
\frac{1}{M}
\sum_{i=1}^{M}D_{i},\ \
S_{D}^{2}
=
\frac{1}{M-1}
\sum_{i=1}^{M}
(D_{i}-\bar{D})^{2},
\end{equation*}
\end{small}then the test statistic is calculated as

\begin{small}
\begin{equation*}
t=
\frac{\bar{D}-\mu_{D}}{S_{D}/\sqrt{M}},
\end{equation*}
\end{small}which is the observed value of the statistic under the null hypothesis $\text{H}_{0}$. In the case of $n=m=8$, for example, the execution time for each random experiment is given in Fig. \ref{fig_exc_time_8} and the test statistic is $t=15.7623$, which leads to an extremely small $p$-value compared with the significance level $0.001$. Hence, the result is statistically significant to suggest that the ECOS yields a larger execution time than does Algorithm \ref{algorithm1}. In other cases of the system scale, the similar results can be obtained.

\begin{figure}[hbtp]
\centering
\includegraphics[scale=0.53]{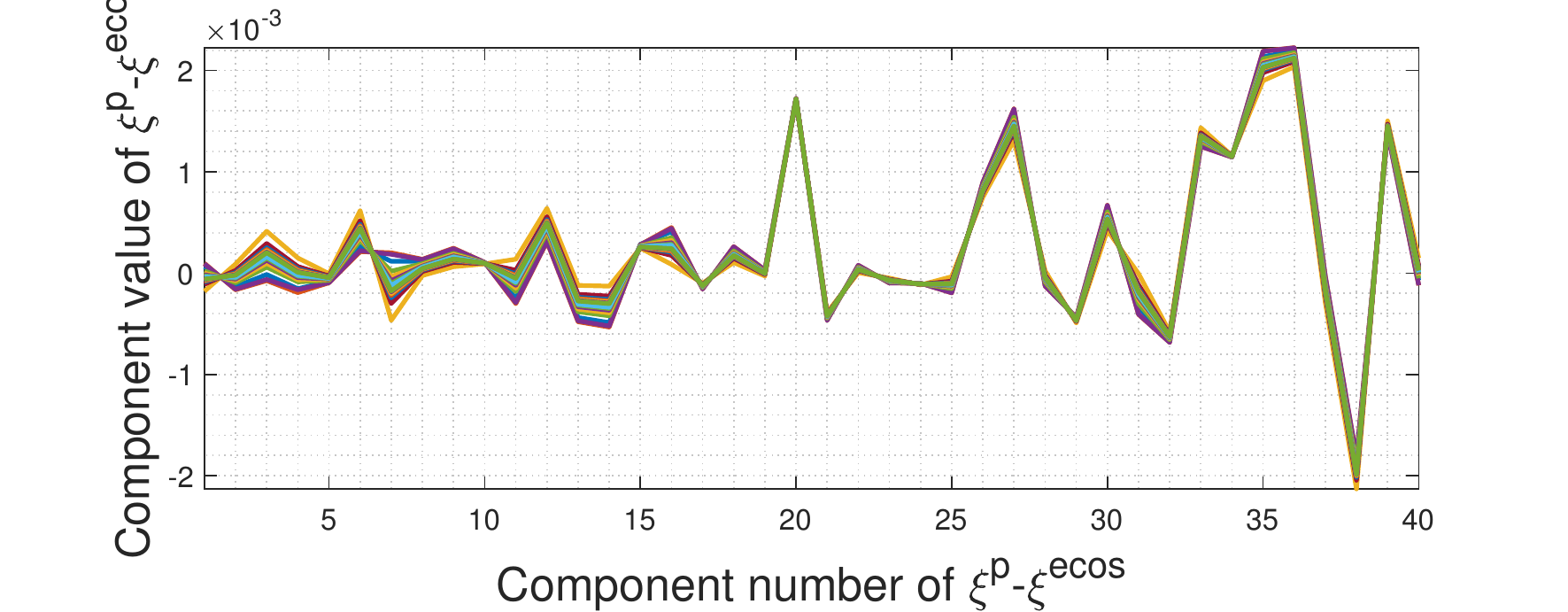}
\caption{Solution error between Algorithm \ref{algorithm1} and ECOS in the case of $n=m=8$.}
\label{fig_solution_error}
\end{figure}

\begin{figure}[hbtp]
\centering
\includegraphics[scale=0.53]{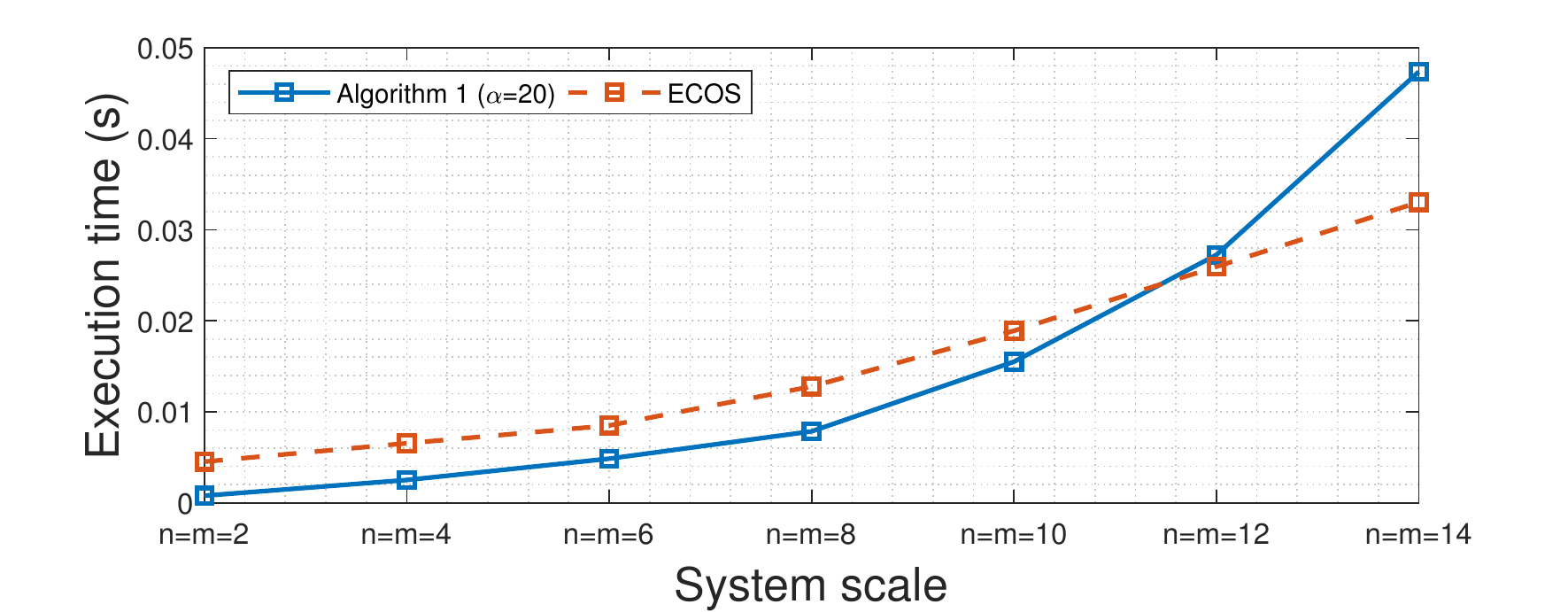}
\caption{Average execution time of Algorithm \ref{algorithm1} and ECOS at different system scales.}
\label{fig_limit_analysis}
\end{figure}

\subsection{Error and Limitation Analysis of Algorithm \ref{algorithm1}}

To verify the accuracy of the solutions of Algorithm \ref{algorithm1}, the solution error $\small \xi^{p}-\xi^{ecos}$ is calculated as $\small \xi^{p}$ satisfies the stop criterion, in which the ECOS solution is denoted as $\small \xi^{ecos}$. For example, give one random MPC problem in the case of $n=m=8$, each component of solution error is shown in Fig. \ref{fig_solution_error}, in which different color lines denote different $\small\alpha$. The results in Fig. \ref{fig_solution_error} reveal that the component is not greater than $\small2.2\times10^{-3}$ in each case of $\small\alpha$, hence, the solution of Algorithm \ref{algorithm1} is close to the ECOS solution. Moreover, notice that the solution error with different $\small\alpha$ is close to each other, which means that the selection of $\small\alpha$ has little influence on the final solution. In other random optimization problems, the same conclusion can be obtained. In this way, the accuracy of the solutions of Algorithm \ref{algorithm1} is verified. However, the limitation of Algorithm \ref{algorithm1} is that it is only suitable for the small size MPC problems. The illustration is given as Fig. \ref{fig_limit_analysis}, in which the average execution time of Algorithm \ref{algorithm1} ($\alpha=20$) and ECOS are presented. Fig. \ref{fig_limit_analysis} implies that the performance of Algorithm \ref{algorithm1} degrades with the increase of the system scale. The extension of Algorithm \ref{algorithm1} such that the large-scale optimization problems can be solved efficiently is the topic of the future research.

\section{Conclusion}

\label{section5}

In this letter, QP problems are solved by a novel PGM. We show that the FISTA is a special case of the proposed method and the convergence rate can be improved from $O(1/p^{2})$ to $O(1/p^{\alpha})$ by selecting the positive real roots of a group of high order polynomial equations as the iterative parameters. Based on a batch of random experiments, the effectiveness of the proposed method on MPC problem has been verified.

\begin{appendices}

\section{From Standard MPC to QP}

\label{appendices}

According to the nominal model (\ref{state_equation}), the relationship between the predicted nominal states and inputs in a finite horizon $\small N$ can be expressed as

\begin{small}
\begin{equation}
\boldsymbol{x}_{k}
=
A_{1}x_{k}
+
A_{2}\boldsymbol{u}_{k},
\end{equation}
\end{small}where

\begin{small}
\begin{equation}
A_{1}
=
\begin{bmatrix}
  A \\
  \vdots \\
  A^{N}
\end{bmatrix},\
A_{2}
=
\begin{bmatrix}
  B & \boldsymbol{0} & \cdots & \boldsymbol{0} \\
  AB & B & \cdots & \boldsymbol{0} \\
  \vdots & \vdots & \ddots & \boldsymbol{0} \\
  A^{N-1}B & A^{N-2}B & \cdots & B
\end{bmatrix}.
\end{equation}
\end{small}Denote $\small Q_{1}=\text{diag}(Q,\cdots,Q,P)\in\mathcal{R}^{Nn\times Nn}$ and $\small R_{1}=\text{diag}(R,\cdots,R)\in\mathcal{R}^{Nm\times Nm}$, the objective (\ref{original_cost_function}) containing the equality constraints can be written as

\begin{small}
\begin{equation}
J
(\boldsymbol{x}_{k},\boldsymbol{u}_{k})
=
\frac{1}{2}
\boldsymbol{u}_{k}^{T}\mathcal{H}\boldsymbol{u}_{k}
+
\mathcal{G}(x_{k})^{T}\boldsymbol{u}_{k}
+
c(x_{k}),
\end{equation}
\end{small}where $\mathcal{H}=A_{2}^{T}Q_{1}A_{2}+R_{1}$, $\mathcal{G}(x_{k})=A_{2}^{T}Q_{1}A_{1}x_{k}$ and $c(x_{k})=\frac{1}{2}x_{k}^{T}A_{1}^{T}Q_{1}A_{1}x_{k}$. Then the standard quadratic optimization objective is obtained. Let $\tilde{F}=\text{diag}(F,\cdots,F)\in\mathcal{R}^{Nf\times Nn}$, $\tilde{\Phi}=(\boldsymbol{0},\Phi)\in\mathcal{R}^{w\times Nn}$ ($\Phi$ is the terminal constraint on the predicted state $x_{N|k}$), $\bar{F}=(\tilde{F}^{T},\tilde{\Phi}^{T})^{T}\in\mathcal{R}^{(Nf+w)\times Nn}$ and $\bar{G}=\text{diag}(G,\cdots,G)\in\mathcal{R}^{Ng\times Nm}$, the linear constraints of (\ref{original_MPC}) can be written as

\begin{small}
\begin{equation}
\mathcal{A}
\boldsymbol{u}_{k}
\leq
\mathcal{B}(x_{k}),
\end{equation}
\end{small}where

\begin{small}
\begin{equation}
\mathcal{A}
=
\begin{bmatrix}
  \bar{F}A_{2} \\
  \bar{G}
\end{bmatrix},\
\mathcal{B}(x_{k})
=
\begin{bmatrix}
  \boldsymbol{1}-\bar{F}A_{1}x_{k} \\
  \boldsymbol{1}
\end{bmatrix}.
\end{equation}
\end{small}In this way, the MPC problem (\ref{original_MPC}) is formulated into the quadratic programming form (\ref{standard_QP}). After solving the MPC problem, the first term of the optimal input trajectory $\small \boldsymbol{u}_{k}^{*}$ is imposed to the plant at time $\small k$.

\end{appendices}

\bibliographystyle{IEEEtranS}
\bibliography{ieeepesp}

\end{document}